\newcommand{\Cantor}{2^{\omega}}
\newcommand{\INF}{\mathrm{INF}}
\newcommand{\E}{\mathrel{E}}
\title{Learning equivalence relations on Polish spaces}
\author[Rossegger]{Dino Rossegger}
\author[Slaman]{Theodore Slaman}
\author[Steifer]{Tomasz Steifer}
\address[Rossegger]{Institute of Discrete Mathematics and Geometry, Technische Universit\"at Wien}
\email{dino.rossegger@tuwien.ac.at}
\address[Slaman]{Department of Mathematics, University of California, Berkeley}
\email{slaman@math.berkeley.edu}
\address[Steifer]{Universidad Católica de Chile \& Institute of Fundamental Technological Research, Polish Academy of Sciences}
\email{tsteifer@ippt.pan.pl}
\subjclass{03E15,68Q32, 03D80}
\thanks{The work of Rossegger was supported by the European Union's Horizon 2020 Research and Innovation Programme under the Marie Sk\l{}odowska-Curie grant agreement No. 101026834 — ACOSE. Steifer was supported by the National Science Center grant no. 2021/05/X/ST6/00594 and by the Agencia Nacional de Investigación y Desarrollo grant no. 3230203. Part of this research was conducted when the first and the last author were visiting University of California, Berkeley, where the middle author resides. }
\begin{document}
\maketitle
\begin{abstract}
  We investigate natural variations of behaviourally correct learning and
  explanatory learning---two learning paradigms studied in algorithmic
  learning theory---that allow us to ``learn'' equivalence
  relations on Polish spaces. We give a characterization of the learnable
  equivalence relations in terms of their Borel complexity and show that the behaviourally
  correct and explanatory learnable equivalence relations coincide both in
  uniform and non-uniform versions of learnability and provide a
  characterization of the learnable equivalence relations in terms of their
  Borel complexity. We also show that the set of uniformly learnable equivalence relations
  is $\bPi^1_1$-complete in the codes and study the learnability of several
  equivalence relations arising naturally in logic as a case study.
\end{abstract}

\section{Introduction}

A common scenario in algorithmic learning theory can be viewed in terms of the
following game played in infinitely many stages: 

\begin{displayquote}
Fix a countable sequence of languages
$(l_i)_{i\in\omega}$, $l_i\subseteq \omega$ (possibly with repetitions) and an
index $i_0$. 
At each stage s, an \emph{informant} presents a
\emph{learner} with a word $w_s\in l_{i_0}$ such that $\{w_s: s\in
\omega\}=l_{i_0}$ and the learner has to make an
hypothesis $h_s\in\omega$, guessing which language they are presented with. 
\end{displayquote}
This way of presenting the language $l_{i_0}$ is often referred to as \emph{learning
from text}.
There are several ways to define winning conditions for this learning game. The
first one, giving the learning paradigm often referred to as \emph{explanatory} learning or
\emph{learning in the limit} is due to Gold~\cite{gold1967language}: 
\begin{displayquote}
  The learner wins the game if after finitely many stages they stabilize on a correct
  hypothesis, i.e. if there exists $h=\lim_s h_s$ and $l_h=l_{i_0}$.
\end{displayquote}
Another learning paradigm, \emph{behaviourally correct} (BC) learning, is due to
Feldman~\cite{feldman1972}:
\begin{displayquote}
  The learner wins the game if there is a finite stage $s$ such that at all stages
  $t>s$, $l_{h_t}=l_{i_0}$.
\end{displayquote}
Notice that we do not require that the learner guesses the correct index, they
just need to identify the language up to equality.

Recently, Fokina, Kötzing, and San Mauro~\cite{fokina2019limit} adapted
the explanatory paradigm to learning of the isomorphism relation on countable structures.
As above, we can view this as a game where a sequence of countable
structures $(\A_i)_{i\in\omega}$ is fixed, and the learner, presented with
finite fragments of a structure $\B$, needs to hypothesize to which of the
$\A_i$ $\B$ is isomorphic. The difference to classical
explanatory learning is that the informant may play an isomorphic copy of one
of the structures $\A_i$, or a structure not isomorphic to any of the structures
in the sequence. Bazhenov, Fokina, and San Mauro~\cite{bazhenov2020learning}
characterized learnable sequences of structures as those that have $\Sinf{2}$
quasi Scott sentences in the infinitary logic $L_{\omega_1\omega}$. That is,
sentences $\phi_i$ of the form $\Vvee_{i\in\omega} \exists \bar x
\Wwedge_{j\in\omega} \forall \bar y \psi_{i,j}(\bar x,\bar y)$ where the
$\psi_{i,j}$ are 
quantifier-free such that
$\A_i\models \phi_i$ and if $\A_j\not\cong \A_i$,
then $\A_j\not\models \phi_i$. A result connecting this paradigm with
descriptive set theory is due to Bazhenov, Cipriani and San
Mauro~\cite{bazhenov2023learning}. They showed that a sequence of structures is
learnable if and only if the isomorphism problem on the associated class is
continuously reducible to $E_0$, the tail equivalence relation on infinite
binary strings.

Motivated by the above work, we take a 
different approach to connecting algorithmic learning theory with descriptive set
theory. Our aim is to study the following question:

\begin{displayquote}
  What does it mean for an equivalence relation on a Polish space to be
learnable?
\end{displayquote}

Fokina, Kötzing, and San Mauro's learning paradigm already gives some insight into an equivalence relation on a Polish space. After all, the space of countable structures in a
relational vocabulary can be viewed as a closed subspace of $2^\omega$ and
thus admits a natural Polish topology. However, learnability in their paradigm
is a local property, only giving information on the countable set of isomorphism
classes obtained from the sequence $(\A_i)_{i\in\omega}$. 

Our approach is global, aiming to give information into an equivalence relation on
the whole space. We introduce and study both uniform and non-uniform versions of explanatory learning and behaviourally
correct learning of equivalence relations on Polish spaces. Our main results
show that both the uniform and non-uniform versions of these learning paradigms
are equivalent (\cref{thm:nonuniformbcifflearn}, \cref{thm:bclearning}). The
proof of this result for the non-uniform version is relatively simple, while the
proof of the uniform result is quite involved, using recent results of
Lecomte~\cite{lecomte2020complexity} on Borel equivalence relations and one of
our other main theorems, that an equivalence relation is uniformly (explanatory)
learnable if and only if it is $\bSigma^0_2$. All of the above results are covered
in \cref{sec:uniformlearnability}. 

In \cref{sec:complexityoflearning} we show that the complexity of the set of
learnable $\bPi^0_2$ equivalence relations on $2^\omega$ is $\bPi^1_1$-complete
in the codes. 
In \cref{sec:casestudy} we analyse the learnability of various equivalence
relations that arise naturally in computability and model theory. At last, we
give an example of a natural $\Pi^1_1$ subset of the natural numbers that is
neither $\Sigma^1_1$ nor $\Pi^1_1$-complete.

The proofs of our results use techniques from several areas of mathematical
logic. Our results in \cref{sec:complexityoflearning} rely on
Cohen forcing and results in the theory of Borel equivalence relations, in
\cref{sec:complexityoflearning} we use techniques from higher recursion theory
and effective descriptive set theory, and most proofs in \cref{sec:casestudy}
will seem familiar to computability theorists. We assume that the reader has
some familiarity with these subjects.

All our proofs are written assuming that the underlying Polish space is Cantor
space $2^\omega$---the space of infinite binary sequences under the
product topology. However, the theorems
are stated for general (effective) Polish spaces. Our proofs can be easily
modified to hold in this setting. 
\section{Notions of learnability}\label{sec:uniformlearnability}
\subsection{Non-uniform learnability}
The following definition presents our attempt to obtain a non-uniform generalization of the
explanatory learning framework. 
\begin{definition}\label{def:nonuniformlearning}
  Let $E$ be an equivalence relation on a Polish space $X$ and assume $\omega$ is equipped with the discrete topology. 
  We say that $E$ is \emph{non-uniformly learnable} if for every $\vec x\in
  X^{\omega}$, there are continuous
  functions $l_n(\vec x): X\to \omega$ such that for every $x\in X$, if $xEx_i$ for some $i\in\omega$,
  then $L(x,\vec x)=\lim l_n( \vec x,x)$ exists and $x E x_{L(x,\vec x)}$. We call the partial function $L$ a learner, and write $L(x,\vec x,n)$ for $l_n(\vec x,x)$. 
\end{definition}
If $x\not\E x_i$ for any $x_i\in \vec x$, then the behavior of a potential
learner $L$ is not specified. It might not be defined, or its value might be any
index. We say that a learner $L$ which is defined in this case gives \emph{false
positives}.
\begin{proposition}\label{prop:eqclassesS2sep}
  Let $\E$ be an equivalence relation on a Polish space $X$. Then the following are equivalent.
  \begin{enumerate}
    \item $\E$ is non-uniformly learnable. 
    \item For every $\vec x\in X^\omega$, there are sets $S_i$, $S_i\in\bSigma^0_2(X)$ such
  that for every $i\in\omega$, $[x_i]\subseteq S_i$ and $S_i\cap S_j=\emptyset$
  if $x_i\not \E x_j$.
  \end{enumerate}
  Furthermore, $\E$ is non-uniformly learnable by learners not giving false positives if and only $[x]$ is $\bSigma^0_2$ for every
  $x\in X$.
\end{proposition}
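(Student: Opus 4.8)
The plan is to pass freely between learners and $\bSigma^0_2$ sets through a single dictionary. Note first that since $\omega$ carries the discrete topology, each preimage $\{x : l_n(\vec x,x)=k\}$ is clopen, so for a fixed index $k$ the set $A_k=\{x : \lim_n l_n(\vec x,x)=k\}=\bigcup_N\bigcap_{n\geq N}\{x : l_n(\vec x,x)=k\}$ is $\bSigma^0_2$, and likewise the convergence domain $\{x:\lim_n l_n(\vec x,x)\text{ exists}\}=\bigcup_k A_k$ is $\bSigma^0_2$. For $(1)\Rightarrow(2)$ I fix $\vec x$ together with the continuous functions $l_n(\vec x)$ witnessing non-uniform learnability and set $S_i=\bigcup\{A_j : x_j\E x_i\}$, a countable union of $\bSigma^0_2$ sets, hence $\bSigma^0_2$. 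If $x\in[x_i]$ the learner converges to some $j=L(x,\vec x)$ with $x_j\E x\E x_i$, so $x\in A_j\subseteq S_i$; thus $[x_i]\subseteq S_i$. If $x\in S_i\cap S_j$ then $x\in A_k\cap A_{k'}$ for some $k,k'$ with $x_k\E x_i$, $x_{k'}\E x_j$, and since a limit is unique $k=k'$, giving $x_i\E x_k\E x_j$; contrapositively $x_i\not\E x_j$ forces $S_i\cap S_j=\emptyset$.

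For $(2)\Rightarrow(1)$ I first reduce to a partition. Given the separating family $(S_i)$, amalgamate by class: for each represented class pick a representative and set $\hat S=\bigcup\{S_j : x_j \text{ lies in that class}\}$. These amalgamated sets are $\bSigma^0_2$, each contains its class, and they are \emph{pairwise disjoint}, since any overlap would come from some $S_j\cap S_{j'}$ with $x_j\not\E x_{j'}$. Re-indexing, I have pairwise disjoint $\bSigma^0_2$ sets $(S_i)$ with union $D=\bigcup_i[x_i]$, and I must produce continuous $l_n$ converging on $D$ to a correct index. Write $S_i=\bigcup_m C_{i,m}$ with $C_{i,m}=[T_{i,m}]$ closed, fix a bijection $e(k)=(i_k,m_k)$ of $\omega$ with $\omega\times\omega$, and define $l_n(\vec x,x)=i_{k(x,n)}$, where $k(x,n)$ is the least $k\leq n$ with $x\mathord\upharpoonright n\in T_{i_k,m_k}$ (a default value if none). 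Each $l_n$ depends only on $x\mathord\upharpoonright n$, so it is continuous.

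The convergence argument exploits that \emph{leaving a closed set is permanent}. Fix $x\in S_{i_0}$ and let $k^*$ be least with $x\in C_{i_{k^*},m_{k^*}}$; disjointness forces $i_{k^*}=i_0$. For every $k<k^*$ we have $x\notin C_{i_k,m_k}$, so $x$ leaves $T_{i_k,m_k}$ at some finite stage and never re-enters; as there are only finitely many such $k$, there is a stage $N$ past which all of them are permanently excluded while $x\mathord\upharpoonright n\in T_{i_{k^*},m_{k^*}}$ holds for all $n$. Hence $k(x,n)=k^*$ and $l_n(\vec x,x)=i_0$ for $n\geq N$, and $x_{i_0}\E x$. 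I will use exactly this construction for the \emph{furthermore} as well: when every class is $\bSigma^0_2$ I take $S_i=[x_i]$, so that $\bigcup_i S_i=D$ and, choosing $e$ so that the block indices $i_k$ do not converge as $k\to\infty$, a point $x\notin D$ leaves every $C_{i,m}$, forcing $k(x,n)\to\infty$ and making $l_n(\vec x,x)$ diverge—so the learner gives no false positives. Conversely, a no-false-positive learner applied to $\vec x=(x,x,\dots)$ converges exactly on $[x]$, and since its convergence domain is $\bSigma^0_2$ by the opening observation, $[x]$ is $\bSigma^0_2$.

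The main obstacle is precisely the construction in $(2)\Rightarrow(1)$: membership in a $\bSigma^0_2$ set is not decidable in the limit, and the naive rule ``output the least index with currently positive evidence'' fails, because spurious evidence for a wrong block can recur cofinally (a point may lie arbitrarily deep inside $C_{i,m}$ for infinitely many $m$ while avoiding $S_i$). The resolution is to base the learner not on positive evidence but on the monotone event of \emph{exiting} a closed set, together with the reduction to a disjoint family; this guarantees that only finitely many indices precede the true witness and that each of them is discarded once and for all, which is what yields genuine stabilization rather than mere separation.
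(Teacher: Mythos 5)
Your proof of the equivalence of (1) and (2) is correct and follows essentially the same route as the paper: your direction (1)$\Rightarrow$(2) is the paper's argument verbatim (limit sets of a continuous-to-discrete sequence are $\bSigma^0_2$, then group by class and use uniqueness of limits for disjointness), and your (2)$\Rightarrow$(1) is the paper's least-unfalsified-witness learner, with the $\bSigma^0_2$ sets presented as unions of branch sets of trees instead of via recursive predicates; the amalgamation step is a harmless extra. The forward half of the ``furthermore'' (a learner with no false positives applied to $(x,x,\dots)$ has convergence domain exactly $[x]$, which is $\bSigma^0_2$) is also fine.

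The gap is in the backward half of the ``furthermore.'' The claim that for $x\notin D$ the divergence $k(x,n)\to\infty$ ``makes $l_n(\vec x,x)$ diverge'' is false, and no choice of the bijection $e$ can repair it: for \emph{any} bijection every block index occurs infinitely often among the $i_k$, so your side condition on $e$ is vacuous, and the real problem is that $i_{k(x,n)}$ is sampled only along the currently unexcluded pairs, which may all lie in a single block even though each individual pair is eventually excluded. Concretely, let $\E$ on $2^\omega$ make all reals with exactly one $1$ equivalent and all other classes singletons; every class is countable or closed, hence $\bSigma^0_2$. Take $x_0=10^\infty$, so $S_0=[x_0]=\{0^m10^\infty : m\in\omega\}$, decomposed as $C_{0,m}=\{0^j10^\infty : j\leq m\}$, and let every other $x_i$ begin with a $1$ (and have at least two $1$s). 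Run your learner on $x^*=0^\infty$, which is equivalent to no $x_i$. Every pair $(j,m)$ with $j\neq 0$ is excluded at stage $1$, since $x^*\restrict 1\notin T_{j,m}$, while the pair $(0,m)$ is excluded only at stage $m+1$, because $0^n$ is a prefix of $0^m10^\infty$ exactly when $n\leq m$. Hence at every stage $n\geq 1$ the least unexcluded pair has block $0$, and your learner converges to $0$ on $x^*$---a false positive---even though indeed $k(x^*,n)\to\infty$.

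The missing idea is the one the paper uses in the proof of \cref{thm:sigma2ifflearnable}: the learner's output must change whenever its current candidate pair changes. For instance, walk through the pairs \emph{sequentially}, staying on a pair until it is excluded and then moving to the next one in the enumeration, never returning (so that infinitely many changes push the output through infinitely many distinct indices); or keep your rule but output $i_{k(x,n)}$ only when $k(x,n)=k(x,n-1)$, and output $n$ otherwise. With either modification, convergence of the output forces $k(x,n)$ to stabilize at some $k^*$, whence all sufficiently long prefixes of $x$ (and so, by prefix-closure of trees, all prefixes) lie in $T_{i_{k^*},m_{k^*}}$, giving $x\in C_{i_{k^*},m_{k^*}}\subseteq S_{i_{k^*}}=[x_{i_{k^*}}]$; so no convergence is a false positive, and the rest of your argument goes through unchanged.
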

\begin{proof}
  (2)$\implies$(1). Given $\vec x$, let $S_i$ be a countable sequence of
  $\Sigma^0_2(p_i)$ sets covering $[x_i]_{E}$ such that if $x_i\not\E x_j$, then
  $S_i\cap S_j=\emptyset$ and let $R_i$ be recursive
  relations such that
  \[ x\in S_i\iff \exists n\forall m\ R_i(x,p_i,n,m).\]
  Then, for any $x\in X$ and $s\in\omega$, define $L(x,\vec x,s)$ to be the
  minimal $n<s$ such that for all $m<s$ $R_i(x,p_i,n,m)$ if such $n$ exists and
  $L(x,\vec x,s)=s$ otherwise. From the disjointness of the $S_i$ we get that if $j$ is least
  such that $x\E x_j$, then $L(x,\vec x)=j$. Furthermore, for every $s$, $L(-,\vec x,s)$ is
  recursive in $\bigoplus p_i$ and hence continuous. Thus $L$ is a learner
  learning $\vec x$ with respect to $\E$.

  (1)$\implies$(2). Let $L$ be a learner for $\vec x$ and note that the sets
  $L(x,\vec x,n)^{-1}(i)$ are clopen. Then let $C_{n,i}=\{x:
(\forall m>n) L(x,\vec x,m)= i\}$, define
$D_i =\bigcup_n C_{n,i}$ and $S_i=\bigcup_{x_i\E x_j} D_j$. Then the $S_i$ are
$\bSigma^0_2$ and by the definition of non-uniform learnability $[x_i]\subseteq
S_i$. If $L$ learns $\vec x$ without giving false positives, then clearly $S_i=[x_i]$.
\end{proof}

We now turn our attention to a non-uniform adaptation of behavioral correct learning. 
\begin{definition}\label{def:nonuniformbc}
  Let $ \E$ be an equivalence relation on a Polish space $X$ and assume $\omega$ is equipped with the discrete topology. 
  We say that $\E$ is \emph{non-uniformly BC-learnable} if there are continuous
  functions $l_n: X^\omega\times X\to \omega$ such that for $x\in X$ and
  $\vec x=(x_i)_{i\in\omega}\in X^\omega$, if $x\E x_i$ for some $i\in\omega$,
  then for almost every $n$, $x\E x_{l_n(\vec x, x)}$.
\end{definition}
The observant reader might immediately notice that non-uniform BC-learnability
and non-uniform learnability coincide. In order to obtain a learner for $\vec
x$ from a BC-learner, we just have to fix some more information.
\begin{theorem}\label{thm:nonuniformbcifflearn}
  An equivalence relation $\E$ is non-uniformly learnable if and only if it is
  non-uniformly BC-learnable.
\end{theorem}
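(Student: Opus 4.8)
The plan is to prove both directions, but since every learner is trivially a BC-learner (stabilizing on a correct index certainly means the index is correct from some point on), the forward direction is immediate and the real content is the converse: from a non-uniform BC-learner I must construct a non-uniform (explanatory) learner. I would leverage \cref{prop:eqclassesS2sep}: it suffices to show that BC-learnability gives, for every $\vec x \in X^\omega$, a family of $\bSigma^0_2$ sets $S_i$ with $[x_i]_{\E}\subseteq S_i$ and $S_i\cap S_j=\emptyset$ whenever $x_i\not\E x_j$. So the strategy is to extract such a $\bSigma^0_2$ covering family from the continuous functions $l_n$ witnessing BC-learnability, and then quote the proposition to conclude.

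First I would fix $\vec x$ and analyze the sets on which the BC-learner's hypotheses stabilize to a \emph{correct equivalence class}. Mimicking the argument in the proof of \cref{prop:eqclassesS2sep}, for each $n,i$ let $C_{n,i}=\{x: (\forall m>n)\ x_{l_m(\vec x,x)} \E x_i\}$; since the $l_m$ are continuous and take values in the discrete space $\omega$, each condition $l_m(\vec x,x)=k$ is clopen, but the constraint is now $x_k \E x_i$ rather than equality of indices, so $C_{n,i}$ is a countable union over the admissible $k$'s of clopen sets intersected, i.e. a $\bPi^0_1$-over-$\bSigma^0_1$ set; taking the union over $n$ yields a $\bSigma^0_2$ set $D_i=\bigcup_n C_{n,i}$. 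By the definition of BC-learnability, if $x\E x_i$ then from some stage on every hypothesis $l_m(\vec x,x)$ is an index of something $\E$-equivalent to $x$, hence to $x_i$, so $x\in D_i$; thus $[x_i]_{\E}\subseteq D_i$. I would then set $S_i=\bigcup_{x_j\E x_i} D_j$, which remains $\bSigma^0_2$ and still covers $[x_i]_{\E}$.

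The key verification is disjointness: I need $S_i\cap S_j=\emptyset$ whenever $x_i\not\E x_j$. Suppose $x\in D_k\cap D_l$ with $x_k\E x_i$ and $x_l\E x_j$. Being in $D_k$ means that for large $m$ we have $x_{l_m(\vec x,x)}\E x_k$, and being in $D_l$ means that for large $m$ we have $x_{l_m(\vec x,x)}\E x_l$; taking $m$ large enough for both, the \emph{same} hypothesis index $l_m(\vec x,x)$ witnesses $x_k \E x_{l_m(\vec x,x)} \E x_l$, so $x_k\E x_l$, whence $x_i\E x_k\E x_l\E x_j$, contradicting $x_i\not\E x_j$. This gives the required disjointness, and applying \cref{prop:eqclassesS2sep} produces a genuine learner.

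The main obstacle, and the point demanding the most care, is exactly this disjointness step: unlike the explanatory case where different stabilized indices are automatically incompatible, here the hypotheses need not stabilize to a single index at all, so I must argue about the equivalence classes named by a \emph{single shared} hypothesis at a sufficiently late stage rather than about limiting indices. The transitivity argument above is what makes this go through, and it crucially uses that $\E$ is an equivalence relation (symmetry and transitivity of $\E$ among the $x_i$'s). A secondary routine point is confirming that $D_i$ is genuinely $\bSigma^0_2$: the inner clause $(\forall m>n)\,x_{l_m(\vec x,x)}\E x_i$ is an intersection over $m$ of sets each of which is a countable union (over admissible values $k$ with $x_k\E x_i$) of clopen sets $\{x: l_m(\vec x,x)=k\}$, so it is $\bPi^0_1$ relative to open, and the union over $n$ lands in $\bSigma^0_2$, exactly as needed.
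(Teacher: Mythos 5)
Your proof is correct, but it takes a genuinely different and more roundabout route than the paper's. The paper converts the BC-learner directly into an explanatory learner: setting $e_i=\{j: x_j\E x_i\}$, it defines $L(x,\vec x,n)=\min\{j: j\in e_{l_n(\vec x, x)}\}$, i.e., it replaces each hypothesis by the least index among the $x_j$ in that hypothesis's class; once the BC-learner's hypotheses all land in $e_i$, this value is constantly $\min e_i$, so the new learner converges. This rests on the same underlying observation as your argument---in the non-uniform setting the learner may consult the restriction of $\E$ to $\{x_i : i\in\omega\}$ for free---but it avoids the detour through \cref{prop:eqclassesS2sep} and produces the learner in one line. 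Your route, extracting $\bSigma^0_2$ separating sets and then invoking the proposition, is longer but makes the structural content explicit, and your disjointness argument via a single shared late-stage hypothesis is exactly the right substitute for the ``distinct limits are incompatible'' step of the explanatory case.

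One point in your complexity count should be tightened. You describe $\{x: x_{l_m(\vec x,x)}\E x_i\}$ as a countable union of clopen sets, hence open, and then call the intersection over $m$ ``$\bPi^0_1$ relative to open''; taken literally that gives $\bPi^0_2$ for $C_{n,i}$ and $\bSigma^0_3$ for $D_i$, which would not suffice for \cref{prop:eqclassesS2sep}. The correct observation is that $\{x: x_{l_m(\vec x,x)}\E x_i\}$ is the preimage of the set $e_i$ under the continuous map $x\mapsto l_m(\vec x,x)$ into the discrete space $\omega$, hence clopen: its complement is the preimage of $\omega\setminus e_i$ and is likewise open. Then $C_{n,i}$ is closed and $D_i=\bigcup_n C_{n,i}$ is genuinely $\bSigma^0_2$. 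With that repair the argument is complete.
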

\begin{proof}
  Clearly, any non-uniformly learnable equivalence relation is non-uniformly
  BC-learnable. On the other hand, say that $(l_n)_{n\in\omega}$ gives a BC
  learner for the sequence $\vec x$ and associate with every $i$, the set
  $e_i=\{ j: x_j\E x_i\}$. Now, define a function $L$ by $L(x,\vec x,n)=\min j [
  j\in e_{l_{n}(x,\vec x)}]$. We claim that $L$ is a learner for $E$ on $\vec
  x$. Clearly, $L$ is continuous on every $n$. Say that $x\E x_i$, then for
  almost every $n$, $l_n(\vec x,x)\in e_i$. So, for almost every $n$, $L(x,\vec
  x,n)=\min j [j \in e_i]$ and thus $L$ learns $\E$ on $\vec x$.
\end{proof}
Recall that an equivalence relation is \emph{countable} if all its equivalence classes
are countable. Countable Borel equivalence relations play an important role in
descriptive set theory and have seen much attention in the past decades,
see~\cite{kechris2019} for a summary of these developments.
Unfortunately, non-uniform learnability does not provide information when it comes to countable
equivalence relations.
\begin{proposition}\label{prop:ctblenonuniform}
    Every countable equivalence relation on a Polish space is non-uniformly
    learnable by learners not giving false positives.
\end{proposition}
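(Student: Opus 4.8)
The plan is to reduce this statement immediately to the furthermore clause of \cref{prop:eqclassesS2sep}, which asserts that $\E$ is non-uniformly learnable by learners not giving false positives exactly when every $\E$-class $[x]$ is $\bSigma^0_2$. Thus I would not construct a learner by hand at all; instead the whole content of the proposition collapses to the purely descriptive-set-theoretic observation that the classes of a countable equivalence relation are $\bSigma^0_2$, after which the previous proposition does the rest.

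Concretely, I would fix a countable equivalence relation $\E$ on a Polish space $X$ and an arbitrary point $x\in X$. By countability, $[x]$ is a countable set, say $[x]=\{y_k:k\in\omega\}$. Since $X$ is Polish, hence metrizable and in particular $T_1$, each singleton $\{y_k\}$ is closed, i.e. $\bPi^0_1$. Consequently $[x]=\bigcup_k\{y_k\}$ is a countable union of closed sets and therefore $\bSigma^0_2$ (an $F_\sigma$ set). As $x$ was arbitrary, every $\E$-class is $\bSigma^0_2$, and the furthermore clause of \cref{prop:eqclassesS2sep} then delivers a learner for every $\vec x$ that produces no false positives.

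There is no genuine obstacle here: the single point needing care is the elementary fact that a countable subset of a metrizable space is $F_\sigma$, which is standard. The only thing worth underlining is that the result relies essentially on the ``no false positives'' formulation, since the classes of a countable relation need carry no more complexity than $\bSigma^0_2$; the learner supplied by the proof of \cref{prop:eqclassesS2sep} simply converges, on an input $x$ with $x\E x_i$, to the least index of an element of $\vec x$ lying in the $\bSigma^0_2$ cover of $[x]$, and diverges on every $x$ lying in no listed class, so that nothing at all is demanded off the represented classes.
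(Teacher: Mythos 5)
Your proof is correct, but it takes a genuinely different route from the paper's. The paper constructs the learner by hand: fixing an enumeration $(y_{i,j})_{j\in\omega}$ of each class $[x_i]$, it defines $l_n(x)$ to be the least $i<n$ such that some $y_{i,j}$ with $j<n$ agrees with $x$ on the first $n$ bits (and $n$ if there is none), and then asserts that this sequence of continuous functions converges correctly. You instead reduce the whole statement to the furthermore clause of \cref{prop:eqclassesS2sep}, so that all that remains is the observation that each class of a countable equivalence relation, being a countable union of singletons in a $T_1$ space, is $F_\sigma$ and hence $\bSigma^0_2$; taking $S_i=[x_i]$ then yields pairwise disjoint $\bSigma^0_2$ sets equal to the classes, which is exactly the hypothesis under which that proposition supplies a learner without false positives. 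The two arguments are close in substance --- unwinding the (2)$\implies$(1) construction of \cref{prop:eqclassesS2sep} on the presentation $[x_i]=\bigcup_j\{y_{i,j}\}$ produces essentially the paper's learner --- but your version is arguably cleaner: it isolates the single descriptive-set-theoretic fact doing the work and delegates the convergence bookkeeping to a lemma already proved, whereas the paper's direct definition leaves the verification that the limit exists and is correct to the reader. One point worth being explicit about when you invoke the furthermore clause is that it quantifies over all $x\in X$, not only over the points of the hypothesis sequence; for a countable equivalence relation this costs nothing, since every class is countable, and your argument does handle an arbitrary $x$.
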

\begin{proof}
  Let $\E$ be a countable equivalence relation. Given a hypothesis $\vec{x}$,
  let $\vec{y_i}$ be an enumeration of the equivalence class for $x_i$ and for
  every $n$,
  define the continuous function 
  \[l_n(x)=\begin{cases}(\min i<n) [ (\exists j<n)\ y_{i,j}\restrict n=x\restrict n] &
    \text{if such $i$ exists}\\
  n& \text{otherwise}\end{cases}.\]
  It is not hard to see that the so-defined sequence of functions
  defines a learner.
\end{proof}
\subsection{Uniform learnability}
\begin{definition}\label{def:uniformlearning}
  Let $\E$ be an equivalence relation on a Polish space $X$ and assume $\omega$ is equipped with the discrete topology. 
  We say that $\E$ is \emph{uniformly learnable}, or just \emph{learnable}, if there are continuous
  functions $l_n: X^\omega\times X\to \omega$ such that for $x\in X$ and
  $\vec x=(x_i)_{i\in\omega}\in X^\omega$, if $x\E x_i$ for some $i\in\omega$,
  then $L(x,\vec x)=\lim l_n(\vec x, x)$ exists and $x \E x_{L(x,\vec x)}$. We call the partial function $L$ a \emph{learner}
  for $E$ and write $L(x,\vec x,n)$ for $l_n(\vec x, x)$.
\end{definition}
\begin{definition}\label{def:computablelearner}
    A learner $L=\lim l_n$ is $x$-computable for $x\in 2^\omega$ if 
    there is an $x$-recursive function $f:\omega\to \omega$ such
    that $\Phi_{f(n)}^x=l_n$ where $(\Phi_i)_{i\in \omega}$ is a standard
    enumeration of Turing operators.
\end{definition}
Clearly, every learner is $x$-computable for some $x\in 2^\omega$. The main
result of this section is a characterization of the uniformly learnable
equivalence relations in terms of their Borel complexity. Our proof will
rely on forcing. There are many treatments of forcing in the literature. For a concise overview of forcing both in the computability theory and set theory setting we suggest~\cite{chong2015}. Before we give our proof we note some interactions between
learnable equivalence relations and generics.

\begin{proposition}\label{prop-genericcone}
Let $\E$ be learnable by an $a$-computable learner. If for $x\in 2^\omega$ there exists 
$z\in 2^\omega$, 1-generic relative to $x\oplus a$, such that $x\E z$, then there
exists $p\in 2^{<\omega}$
such that for all $y\in\llbracket p \rrbracket$ we have $x\E y$.
\end{proposition}
\begin{proof}
Suppose that $\E$ is learnable by an $a$-computable learner $L$. Fix a real $x$
and $\vec g$, a sequence of mutually 1-generics relative to $x\bigoplus a$. By
learnability, if for any $g$ in $\vec g$ we have $x\E g$, then $L(x,\vec g)$
converges. Suppose that there exists $k$ and $n_0$ such that for all $m>n_0$ we
have $L(x,\vec g, m)=k$ and fix such $k$ and $n_0$. We will argue that then
there exists a finite $\vec p$ with $p_i\in 2^{<\omega}$ that already forces the convergence. Consider a set $C$ consisting of finite $\vec h$ with $h_i\in 2^{<\omega}$ such that for some $n>n_0$ the learner diverges from $k$, i.e., $L(x,\vec h,n)\neq k$. This set is recursively enumerable in $x$ and $a$ and $\vec g$ has to either meet or avoid it. If no finite $\vec p$ forces the convergence, then every initial segment of $\vec g$ can be extended to an element of $C$. But then $\vec g$ cannot avoid $C$, contradicting the definition of $n_0$. 

So take $\vec p$ such that $\vec p\forces (\forall m>n_0) L(x,\dot{\vec g},m)=k$ and assume without loss of generality that $k<|\vec p|$. Then for any $y\in \llbracket p_k\rrbracket$, $L(x,p_1,\dots, y, \dots, p_{|\vec p|})=k$, since by the use principle for continuous functions if there exists $q\succeq p_k$ such that $L(x,p_1,\dots, q,\dots, p_{|\vec p|},m)=k$ for any $m$, then $L(x,p_1,\dots r,\dots, p_{|\vec p|},m)=k$ for all $r\succeq q$.

\end{proof}
\begin{corollary}
    If $\E$ is learnable by an $a$-computable learner and the $\E$-equivalence
    class of $x$ is countable, then $x$ is not $\E$-equivalent to any $y$ which
    is 1-generic relative to $x\oplus a$.
\end{corollary}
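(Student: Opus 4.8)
The plan is to prove this by contraposition, using \cref{prop-genericcone} essentially as a black box. Suppose toward a contradiction that the hypotheses hold—$\E$ is learnable by an $a$-computable learner and $[x]_\E$ is countable—yet $x$ \emph{is} $\E$-equivalent to some $y$ that is 1-generic relative to $x\oplus a$. I would then feed this $y$ into the proposition, taking $z=y$, since $y$ is precisely a real that is 1-generic relative to $x\oplus a$ and satisfies $x\E y$.

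The proposition then hands me a string $p\in 2^{<\omega}$ such that $x\E y'$ for every $y'\in\llbracket p\rrbracket$. The only additional observation needed is that the cylinder $\llbracket p\rrbracket$ is homeomorphic to Cantor space and therefore has size continuum; in particular it is uncountable. Since $\llbracket p\rrbracket\subseteq [x]_\E$, the equivalence class $[x]_\E$ contains an uncountable set, so $[x]_\E$ is itself uncountable. This directly contradicts the assumption that $[x]_\E$ is countable, completing the argument.

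I do not expect any genuine obstacle here: the corollary is an immediate consequence of \cref{prop-genericcone}. The single conceptual point worth stating explicitly is the one that makes the contradiction bite—namely that the existence of even one generic witness in the class forces an \emph{entire} basic open set into the class, and such a set is necessarily uncountable. This is exactly the tension between the "local" genericity of a single witness and the rigidity imposed by learnability, which the proposition converts into a full cylinder of equivalent reals.
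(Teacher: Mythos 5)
Your argument is correct and is exactly the intended derivation: the paper states this corollary without proof precisely because it follows immediately from \cref{prop-genericcone} by the observation you make, namely that a single generic witness in $[x]_\E$ yields an entire cylinder $\llbracket p\rrbracket\subseteq[x]_\E$, which is uncountable. No gaps; nothing further is needed.
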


\begin{theorem}\label{thm:sigma2ifflearnable}
Let $\E$ be an equivalence relation. 
Then $\E$
is learnable by an $a$-computable learner if and only if $\E$ is $\Sigma^0_2(a)$.
\end{theorem}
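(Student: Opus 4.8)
The plan is to prove both implications, working in $\Cantor$ as the ambient space. The direction $\Sigma^0_2(a)\Rightarrow$ learnable is a direct priority construction; the converse is where the forcing machinery of \cref{prop-genericcone} and its corollary is needed, and it is the harder half.

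For the easy direction, suppose $\E$ is $\Sigma^0_2(a)$, so there is an $a$-recursive clopen relation $R$ with $x\E y\iff\exists n\,\forall m\,R(x,y,n,m)$. Given a target $z$ and a hypothesis sequence $\vec x=(x_i)$, I would fix an enumeration of all pairs $(i,n)$ in order type $\omega$ and, at stage $s$, let $L(z,\vec x,s)$ be the index $i$ of the $\omega$-least pair $(i,n)$ among the first $s$ pairs for which $R(z,x_i,n,m)$ holds for all $m<s$, defaulting to $s$ if no such pair appears. Each $l_s=L(\,\cdot\,,\vec x,s)$ depends on finitely many bits and on $a$, hence is continuous and $a$-computable. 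For convergence, suppose $z\E x_j$ for some $j$ and let $(i^\ast,n^\ast)$ be the $\omega$-least pair that is \emph{permanently active}, i.e.\ with $\forall m\,R(z,x_{i^\ast},n^\ast,m)$; such a pair exists since $(j,n)$ is permanently active for the least witness $n$. Only finitely many pairs precede $(i^\ast,n^\ast)$, and each of them fails at some $m$ and is discarded from some stage on, while $(i^\ast,n^\ast)$ is active at every stage. Hence $L(z,\vec x,s)$ stabilises at $i^\ast$ with $z\E x_{i^\ast}$. The convergence is to a \emph{correct}, not necessarily least, index, which is all the definition requires.

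For the converse, suppose $\E$ is learnable by an $a$-computable learner $L$. The key observation is that running $L$ on a target $x$ against the constant sequence $(y,y,y,\dots)$ turns equivalence into a convergence statement: if $x\E y$ then $x\E x_0$, so $L(x,(y,y,\dots))$ converges to some correct index. Writing $A=\{(x,y):L(x,(y,y,\dots))\!\downarrow\}$, convergence unfolds as $\exists k\,\exists n_0\,\forall n>n_0\,[L(x,(y,y,\dots),n)=k]$; since each inner predicate is clopen in $(x,y)$ relative to $a$ and the two existential quantifiers contract to one, $A$ is $\Sigma^0_2(a)$, and by the previous paragraph $\E\subseteq A$. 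It remains to prove the reverse inclusion $A\subseteq\E$, i.e.\ that $L$ produces no false positives on constant sequences.

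This reverse inclusion is the main obstacle, and it is where I would invoke forcing. The difficulty is intrinsic: stabilisation of the learner is a genuinely $\Sigma^0_2$ event whose $\forall n$ part reads unboundedly much of the input, so no finite condition forces it outright and a wrong guess may persist through any finite amount of data. To eliminate false positives I would argue that a spuriously stable guess can be refuted along a generic. Suppose $L(x,(y,y,\dots))$ stabilises to $k$ but $x\not\E y$; passing to a condition that forces this stabilisation and perturbing the presented sequence by a real $1$-generic relative to $x\oplus a$, the use principle for continuous functions (exactly as in the proof of \cref{prop-genericcone}) keeps the guess equal to $k$ while changing the element actually pointed at. Invoking \cref{prop-genericcone} in the case where $[x]$ meets a relative generic forces $[x]$ to contain a basic open set, while the corollary to \cref{prop-genericcone} disposes of the countable (more generally meager) classes, in which $[x]$ avoids every relative generic; either way the correctness of $L$ on the perturbed sequence is contradicted. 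Thus no false positive survives, $A=\E$, and $\E$ is the $\Sigma^0_2(a)$ set $A$. Equivalently, the same argument normalises $L$ to an $a$-computable learner that never converges on a non-equivalent constant sequence, which is the cleanest way to package the forcing step.
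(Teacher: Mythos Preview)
Your easy direction ($\Sigma^0_2(a)\Rightarrow$ learnable) is correct and essentially identical to the paper's.

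The hard direction has a genuine gap. Your candidate $A=\{(x,y):L(x,(y,y,\dots))\!\downarrow\}$ is indeed $\Sigma^0_2(a)$ and contains $\E$, but the argument that $A\subseteq\E$ does not go through. The constant sequence $(y,y,\dots)$ is \emph{not} generic over anything, so there is no forcing condition that ``forces the stabilisation'': for each individual stage $n>n_0$ the computation $L(x,(y,y,\dots),n)=k$ has finite use, but these uses are unbounded as $n\to\infty$, so no finite condition on the sequence pins down the limit. Consequently you cannot perturb position $k$ to a generic $g$ while \emph{guaranteeing} the learner still outputs $k$. Even granting such a perturbation, neither case of your dichotomy yields a contradiction: if $[x]$ meets a relative generic and hence contains a basic open set, this says nothing about $L$'s behaviour on an input where $x$ is equivalent to \emph{no} coordinate; and if $[x]$ avoids all relative generics, the learner is again under no obligation on the perturbed sequence. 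False positives on sequences missing $[x]$ are simply permitted by the definition, so no correctness requirement is violated. (Note also that $A$ need not be symmetric or transitive, which already suggests it cannot equal $\E$ in general.)

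The paper's proof proceeds quite differently. Rather than a constant sequence, it runs the learner on $y^\frown\vec g$ and $x^\frown\vec g$ with $\vec g$ a sequence of mutual generics relative to $x\oplus y\oplus a$; now stabilisation \emph{is} forced by a finite condition $\vec p$, and the definability of forcing converts this into a $\Sigma^0_2(a)$ formula in $x,y$. There are two cases according to whether the learner points back to index $0$ or into the generic tail, giving three disjuncts $(\ast),(\dag),(\ddag)$; the delicate point is ruling out false positives in the third disjunct, which is done by a reflexivity trick: if convergence to $k$ is forced by $p,\vec p$, insert $h$ itself into the sequence beyond $\vec p$, so the learner must now be correct, yet it still outputs $k$. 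Your invocation of \cref{prop-genericcone} is in the right spirit but is applied to the wrong object; the forcing has to act on the \emph{hypothesis sequence}, not on the target, and that is exactly what the paper arranges.
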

\begin{proof}
    ($\Rightarrow$) Suppose that $\E$ is learnable by an $a$-computable
    learner $L$. Take arbitrary $x$ and $y$ such that $x\E y$ and let $\vec g$
    be a countable sequence of reals mutually sufficiently generic relative to $x\oplus y\oplus  a$.  We analyze the behavior of $L(x,y^\frown\vec g)$ and $L(y,x^\frown\vec g)$.
    Since $x\E y$, in both cases, the learner has to converge to either $0$ or,
    if $\vec g$ contains a $g_i$ such that $g_i\E x$, some other index.
    Hence, we have two possibilities to consider:
    
    \emph{Case 1:}
    $L(x,y^\frown\vec g)$ or $L(y,x^\frown\vec g)$ converge to $0$. Suppose
    without loss of generality that the latter is the case. Fix $n_0$ such that
    for all $m>n_0$ the value of $L(y,x^\frown\vec g,m)$ doesn't change. Then,
    by the mutual genericity of $\vec g$ relative to $x\oplus y\oplus a$ there exists a finite $\vec p\in {2^{<\omega}}^{<\omega}$ such that
    \[ p\forces (\forall m>n_0) (L(y,x^\frown \dot{\vec g},m)\downarrow\implies L(y,x^\frown \dot{\vec g},m)=0) \]
    By the definability of forcing we get that $x$ and $y$ satisfy
    \begin{equation} 
    \exists \vec p, n_0 (\forall \vec q \leq \vec p)(\forall m>n_0) (L(y,x^\frown\vec q,m)\downarrow\implies L(y,x^\frown\vec q,m)=0).
    \tag{$\ast$}\label{s2:eq1}
    \end{equation}
    By the same reasoning, if $L(x,y^\frown\vec g)$ converges to $0$, then the following is satisfied by $x$ and $y$
    \begin{equation}
    \exists \vec p, n_0 (\forall \vec h \leq \vec p)(\forall m>n_0) (L(x,y^\frown\vec q,m)\downarrow\implies L(x,y^\frown\vec q,m)=0).
    \tag{$\dag$}\label{s2:eq2}
    \end{equation}

    \emph{Case 2:}
    $L(x,y^\frown\vec g)=j_1$ and $L(y,x^\frown\vec g)=j_2$ for some indices $j_1,j_2\neq 0$. Again, by similar genericity arguments as in Case 1, this is forced by some finite $\vec p_1$, respectively, $\vec p_2$. Note that then for any $h_1\succ \vec p_1(j_1)$, $h_2\succ \vec p_2(j_2)$,
    \[ L(x,y^\frown g_0,\dots g_{j_1-1}h_1 g_{j_1+1}\dots)=j_1 \text{ and }L(y,x^\frown g_0,\dots g_{j_2-1}h_2 g_{j_2+1}\dots)=j_2.\]
    Now, fix $h\succ \vec p_1(j_1)$ and $\vec h\succ \vec p_2(j_2)$ (i.e., every element of $\vec h$ extends $p_2(j_2)$) mutually
    sufficiently generic relative to $a$ and look at $L(h, \vec h)$. By transitivity of
    $\E$ it will stabilize to some $k$, and as $h,\vec h$ are mutually generic,
    this is forced. Hence, $x$ and $y$ satisfy
    \begin{equation}
      \begin{split}
        \exists n_0 \exists \vec p_1, \vec p_2 \exists j_1,j_2 (\forall n>n_0)
        (\forall \vec q\leq \vec p_1)  \left(L(x,y^\frown \vec
        q,n)\downarrow\right. &\left.\implies L(x,y^\frown \vec q,n)=j_1\right)\\
        \land (\forall \vec q\leq \vec p_2) \left(L(y,x^\frown \vec
        q,n)\downarrow\right. &\left.\implies L(y,x^\frown \vec q,n)=j_2\right)\\
        \land \exists k (\exists r\leq\vec p_1(j_1)) (\exists\vec r \leq
        \vec p_2(j_2)) (\forall n>n_0)\qquad\quad&\\
        (\forall q\leq r)(\forall \vec q\leq \vec r)
        \left( L(q,\vec q,n)\downarrow\right. &\left.\implies L(q,\vec q,n)=k\right)
      \end{split}
      \tag{$\ddag$}\label{s2:eq3}
    \end{equation}
      We claim that the disjunction of \cref{s2:eq1}, \cref{s2:eq2}, and
  \cref{s2:eq3} defines $\E$. As argued above, if $x\E y$, then one of the
  disjuncts is satisfied. On the other hand, if $x\not \E y$, then
  \cref{s2:eq1,s2:eq2} can not be satisfied. However, it might be that the
  learner gives a false positive in the third conjunct of \cref{s2:eq3} and thus
  $x,y$ satisfy \cref{s2:eq3}. The following claim disposes of this possibility.
    \begin{claim}
        If $g,\vec g$ is a sequence of mutually sufficiently generics relative to $a$ and $L(g,\vec g)=k$ is forced by $p,\vec p$, then $L(g,\vec g)$ does not produce false positives for any $h,\vec h\succ p,\vec p$.
    \end{claim}
    \begin{proof}
        Towards a contradiction assume that $L(h,\vec h)=k$ is a false positive where without loss of generality $k<|\vec p|$. By definition, this is only possible if $h\not \E h_i$ for all $i\in\omega$.  As $L(h,\vec h)=k$ is forced by $p,\vec p$ we have that $L(h,h_0,\dots h_{|\vec p|},h,h_{|\vec p|+1},\dots)=k$. But, by reflexivity, $h\E h$ and thus $h\E h_k$, contradicting our assumption. 
    \end{proof}
    
    ($\Leftarrow$) Suppose that there is an $X\in\Cantor$ and an $X$-computable
    formula $\phi$ such that for all $x,y$ we have $x\E y$ if and only if
    $\exists n \forall m\ \phi(x,y,n,m)$. The construction of an $X$-computable
    learner for $\E$ given input $x,\vec y$ is now straightforward. The
    learner works with an enumeration $(a_1,b_1),(a_2,b_2)\ldots$ of all pairs
    of natural numbers. Given $(a_i,b_i)$, the learner outputs $a_i$ until it
    finds a witness for a failure of the universal formula, i.e., until it finds
    $m$ such that $\neg\phi(x,\vec y(a_i),b_i,m)$. If this happens, the learner
    proceeds to the next pair. Now, if $x\E y$, then for some $a_i$ and $b_i$
    we have $\forall m\ \phi(x,\vec y(a_i),b_i,m)$ and the learner converges on $a_i$.
\end{proof}
Recall that an equivalence relation $E\subseteq X^2$ is \emph{reducible} to an
equivalence relation $F\subseteq Y^2$, if there is a function $f: X\to Y$ such
that for all $x_1,x_2\in X$, $x_1\E x_2$ if and only if $f(x_1)\mathrel{F} f(x_2)$. The
relation $\E$ is \emph{Borel (continuously) reducible} to $\mathrel{F}$ if $f$ is Borel
(continuous).

\cref{thm:sigma2ifflearnable} is in stark contrast to the result by Bazhenov,
Cipriani, and San Mauro~\cite{bazhenov2023learning} that a countable class of structures
is explanatory learnable if and only if it is continuously reducible to $E_0$,
the eventual equality relation on $2^\omega$. While $E_0$ is $\bSigma^0_2$ by its standard
definition
\[ x\E_0 y \iff \exists n(\forall m>n) x(m)=y(m),\]
there are many $\bSigma^0_2$ equivalence relations
that are not Borel reducible to $\E_0$. An important example is the equivalence
relation $E_\infty$, the shift action of $F_2$---the free group on
$2$ generators---on
$2^{F_2}$, which is a Borel-complete countable equivalence relation and thus
vastly more complicated than $\E_0$ from the point of view of Borel
reducibility~\cite{dougherty1994}.

Notice that the definition of the learner from a $\bSigma^0_2$ description of
$\E$ in the proof of \cref{thm:sigma2ifflearnable} produces a learner that does
not give false positives, and that we can get $\bSigma^0_2$ definitions of
learnable equivalence relations regardless if their learners produce false
positives or not. Thus, we obtain the following.
\begin{corollary}\label{cor:falsepositives}
 An equivalence relation $E$ is learnable if and only if it is learnable without false positives.
\end{corollary}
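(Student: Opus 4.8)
The plan is to derive \cref{cor:falsepositives} directly from the characterization in \cref{thm:sigma2ifflearnable} together with the observation already recorded about the shape of the learners produced in its two directions. The statement is an equivalence, and one direction is trivial: a learner that does not give false positives is in particular a learner, so if $E$ is learnable without false positives then it is learnable. The content is entirely in the other direction, and here I would route everything through the Borel-complexity characterization rather than attempt a direct surgery on an arbitrary learner.

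First I would invoke \cref{thm:sigma2ifflearnable} in the forward direction: if $E$ is learnable, say by an $a$-computable learner for some $a$, then $E$ is $\Sigma^0_2(a)$, i.e.\ there is an $a$-computable formula $\phi$ with $x \E y \iff \exists n \forall m\ \phi(x,y,n,m)$. Crucially, this conclusion only uses the existence of \emph{some} learner and does not care whether that learner gave false positives. Next I would feed this $\Sigma^0_2(a)$ description back into the backward direction of \cref{thm:sigma2ifflearnable}. The learner constructed there, on input $x,\vec y$, enumerates pairs $(a_i,b_i)$ and outputs $a_i$ only so long as the universal matrix $\forall m\ \phi(x,\vec y(a_i),b_i,m)$ has not yet been refuted. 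If this learner ever stabilizes on some index $a_i$, that is because no refutation was ever found, which means $\forall m\ \phi(x,\vec y(a_i),b_i,m)$ holds, and hence $x \E \vec y(a_i)$. Therefore every convergent output names a genuinely $\E$-equivalent element: the learner produces \emph{no} false positives. This is exactly the remark made in the paragraph preceding the corollary.

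Thus I would conclude: starting from an arbitrary learner for $E$, pass to a $\Sigma^0_2(a)$ definition via the forward direction, and then rebuild a learner from that definition via the backward direction; the rebuilt learner witnesses learnability without false positives. Composing the two directions of \cref{thm:sigma2ifflearnable} is the whole argument, and the point to emphasize is that the round trip is well-defined precisely because the forward direction's hypothesis is insensitive to false positives while the backward direction's output is automatically free of them.

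I do not expect a genuine obstacle here, since both ingredients are already established; the only care needed is bookkeeping of the oracle parameter $a$ (the $\Sigma^0_2$ description we extract is $\Sigma^0_2(a)$, and the rebuilt false-positive-free learner is correspondingly $a$-computable), so that the two applications of \cref{thm:sigma2ifflearnable} are applied with matching parameters and the equivalence is stated uniformly in the sense intended. If one prefers the lightest possible phrasing, the corollary can simply be asserted as an immediate consequence of \cref{thm:sigma2ifflearnable} and the noted shape of its two canonical learners, with no further construction required.
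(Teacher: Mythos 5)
Your proposal is correct and is essentially the paper's own argument: the authors likewise observe that the forward direction of \cref{thm:sigma2ifflearnable} extracts a $\bSigma^0_2$ definition from any learner regardless of false positives, and that the learner rebuilt from that definition in the backward direction converges only when the universal matrix genuinely holds, hence never gives false positives. No gap; you have merely written out in full the remark the paper uses to justify the corollary.
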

Let $\Gamma$ be a class of equivalence relations and say that an equivalence
relation $E\in \Gamma$ is Borel (continuous) $\Gamma$-complete if
every $F\in \Gamma$ Borel (continuously) reduces to $E$.
One of the most tantalizing questions about a class of equivalence relations
$\Gamma$ is whether there is a Borel (continuous) \emph{$\Gamma$-complete} object. We thus ask the following.
\begin{question}
  Does there exist a Borel (continuous) learnable complete equivalence relation?
\end{question}
By \cref{thm:sigma2ifflearnable} one could equivalently ask whether there is
a $\bSigma^0_2$-complete equivalence relation. It turns out that this is a
difficult and long-standing open problem in descriptive set
theory~\cite{lecomte2020complexity}. 

On the other
hand, given a Polish space $X$,
recall that $A\subseteq X$ is $K_\sigma$ if $A$ is
a union of compact subsets of $X$. Rosendal~\cite{rosendal2005} showed that
there exists a Borel complete $K_\sigma$ equivalence relation. One 
example of a complete $K_\sigma$ equivalence relation is the oscillation
relation $O$~\cite{rosendal2005} which can be defined on $2^\omega$ by
\[ \begin{split}
  x\mathrel{O} y \iff&\\ \exists N \forall n,m &\left((\forall
j\in (n,n+m)) x(j)=0 \implies |\{ j\in (n,n+m): y(j)=1\}|<N\right)\\
 \land &\left((\forall j\in (n,n+m)) y(j)=0 \implies |\{ j\in (n,n+m):
x(j)=1\}|<N \right)\end{split}.\]
\begin{remark}
  The oscillation equivalence relation is usually defined on the space of
  infinite increasing sequences of natural numbers $[\omega]^\omega$. Our definition of $O$ above
  is not equivalent, as we have one additional equivalence class consisting of
  all finite strings. However, the function mapping $(a_0,\dots)$ to the
  characteristic function of the associated set $\{ a_i: i\in\omega\}$ is a continuous embedding of the oscillation
  equivalence relation on $[\omega]^\omega$ into $O$.
\end{remark}
 Since every $\bSigma^0_2$ relation on a compact space is $K_\sigma$ we get the
 following. \begin{proposition}
   The oscillation equivalence relation $O$ is Borel complete among learnable
   equivalence relations on $2^\omega$.
 \end{proposition}
\subsection{Uniform BC-learnability}
\begin{definition}\label{def:bclearning}
  Let $E$ be an equivalence relation on a Polish space $X$ and assume $\omega$ is equipped with the discrete topology. 
  We say that $E$ is \emph{(uniformly) BC-learnable}, or just \emph{BC}-learnable, if there are continuous
  functions $l_n: X^\omega\times X\to \omega$ such that for $x\in X$ and
  $\vec x=(x_i)_{i\in\omega}\in X^\omega$, if $xEx_i$ for some $i\in\omega$,
  then for almost all $n$, $xEx_{l_n(\vec x, x)}$. We call the function
  $L(x,\vec x, n)=l_n(\vec x, x)$ a \emph{BC-learner} for $\E$.
\end{definition}
The rest of this section is devoted to showing that the BC learnable and learnable
equivalence relations coincide. This will follow from a series of lemmas.
\begin{lemma}\label{lem:bcs2}
  If $\E$ is BC-learnable, then every equivalence class is $\bSigma^0_2$.
\end{lemma}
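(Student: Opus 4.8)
The plan is to fix a BC-learner $L=\lim_n l_n$ for $\E$ and show that for an arbitrary but fixed $x\in X$, the class $[x]_{\E}$ is $\bSigma^0_2$. The natural strategy is to mimic the structure of the easy direction of \cref{prop:eqclassesS2sep}: I would feed $x$ into the learner as part of the hypothesis sequence, so that when we test a point $y$ against the sequence $\vec x = (x,x,x,\dots)$, the learner's guesses, for almost every $n$, point to an index $j$ with $y\E x_j = x$. Concretely, set $\vec x$ to be the constant sequence with every coordinate equal to $x$ and consider $L(y,\vec x, n) = l_n(\vec x, y)$. By the definition of BC-learnability, if $y\E x$ then for almost all $n$ we have $y\E x_{l_n(\vec x,y)} = x$, which is automatically satisfied; the content we want to extract is a $\bSigma^0_2$ description of exactly those $y$.

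First I would note that for each fixed $n$ and index $i$, the set $\{y : l_n(\vec x, y) = i\}$ is clopen (the preimage of a point under a continuous function into the discrete space $\omega$), so the set $B_{n} = \{y : l_n(\vec x, y) \text{ is an index of the } x\text{-class}\}$ is open, in fact clopen, since with the constant sequence every index $i$ is an index of $[x]$. This forces me to be more careful than in the explanatory case, because with a constant hypothesis sequence every guess is trivially ``correct.'' So instead I would take $\vec x$ to be a sequence that enumerates representatives including $x$ but arranged so that the index of the $x$-class is distinguishable. The cleaner move is: let $\vec x$ place $x$ at coordinate $0$ and mutually distinct non-equivalent points at the other coordinates if such exist, or more robustly, define for each pair $(n,i)$ the clopen set $C_{n,i} = \{y : l_n(\vec x, y) = i\}$ and set $D_i = \{y : (\forall m>n)\, l_m(\vec x, y) = i \text{ for some } n\}$, which is $\bSigma^0_2$. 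Then $[x]_{\E} = \bigcup_{i : x_i \E x} D_i$ up to the false-positive caveat, giving a countable union of $\bSigma^0_2$ sets, hence $\bSigma^0_2$.

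The main obstacle is precisely the false-positive issue and the ``almost all'' quantifier. Unlike explanatory learning, a BC-learner need not converge to a single index, so the guesses $l_n(\vec x, y)$ may oscillate among several indices all of which name the class $[x]$; and for $y\not\E x$ the learner may nonetheless emit indices of $[x]$ at infinitely many (but not cofinitely many) stages. The key point I would establish is the equivalence
\[
  y \E x \iff \exists n\, \forall m > n\; \big(x_{l_m(\vec x, y)} \E x\big),
\]
where the right-hand side, once we observe that $\{i : x_i \E x\}$ is a fixed subset $e$ of $\omega$ (a parameter, not a definable condition on $y$), becomes $\exists n\,\forall m>n\; l_m(\vec x, y)\in e$. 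The forward direction is immediate from BC-learnability. For the backward direction, I would argue that if the guesses eventually land cofinitely in $e$, then $y$ is $\E$-equivalent to some $x_j$ with $j\in e$, hence to $x$; the subtlety is ruling out that the learner settles cofinitely into $e$ on a $y\not\E x$, which I expect to handle by a genericity or forcing argument in the spirit of \cref{prop-genericcone} and the claim inside \cref{thm:sigma2ifflearnable}, showing that a forced cofinal stream of $[x]$-indices cannot be a false positive. Finally, since $e$ is a fixed set of naturals and each condition $\{y : l_m(\vec x, y)\in e\} = \bigcup_{i\in e} C_{m,i}$ is open, the right-hand side $\bigcup_n \bigcap_{m>n}\bigcup_{i\in e} C_{m,i}$ is manifestly $\bSigma^0_2$, completing the proof.
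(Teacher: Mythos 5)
There is a genuine gap, and it is exactly where you flag it: the backward direction of your claimed equivalence
\[
  y \E x \iff \exists n\,\forall m>n\ \bigl(l_m(\vec x,y)\in e\bigr)
\]
is the entire content of the lemma, and it is not something you can defer to ``a genericity or forcing argument.'' A BC-learner carries \emph{no} guarantee whatsoever on an input $y$ that is not $\E$-equivalent to any member of the hypothesis sequence: for your fixed, non-adaptive $\vec x$, the learner may perfectly well emit indices from $e$ cofinally (even constantly) on such a $y$. Genericity cannot rescue this, for two reasons. First, the $\bSigma^0_2$ definition must be correct for \emph{every} $y$, not just for generic ones, and Cohen-generic arguments only control the learner's behaviour on generic inputs. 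Second, the trick used in \cref{prop-genericcone} and in the claim inside \cref{thm:sigma2ifflearnable} works by inserting the tested point \emph{into} the hypothesis sequence (so that reflexivity forces the learner's correctness guarantee to apply); with your fixed sequence $\vec x$ there is no mechanism that puts $y$, or anything equivalent to it, into the sequence, so the guarantee never bites on the problematic $y$'s.

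The paper's proof resolves exactly this obstacle by making the hypothesis sequence \emph{adaptive}, built in stages as a function of the learner's own outputs, and by swapping the roles you chose: it classifies a fixed representative $y\in[x]_E$ (chosen on the boundary of the class, so that there are points $b_n\notin[x]_E$ arbitrarily close to $y$) against a sequence whose $0$-th entry is the candidate $z$. Whenever the learner favours a nonzero index, that coordinate is ``poisoned'' to some $b_n\notin[x]_E$; whenever it keeps answering $0$, the remaining coordinates are extended toward $y$ itself, so that in the limit $y$ appears in the sequence and the BC-guarantee applies. This forces the learner's answers to be cofinitely $0$ exactly when $z\E y$, yielding the $\bSigma^0_2$ definition. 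Your write-up identifies the false-positive obstruction correctly but does not contain the idea needed to overcome it, so the proof as proposed does not go through.
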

\begin{proof}
  Suppose that $\E$ is BC-learnable and that $[x]_E$ is not open. Pick $y\in
  [x]_E\setminus int([x]_E)$, where $int([x]_E)$ denotes the interior of $[x]_E$. Then for every $n$, there is $b_n\in [[y\restrict
  n]]\setminus [x]_E$. Now, given any $z$ run the following procedure.
  
  We will describe a procedure for each $s\in \omega$, where at each stage
  $s$ we will run $L(y;z,a^{s-1}_1,a^{s-1}_2,\dots;s)$. Here, the $a^s_j$ are either
  prefixes of $y$ or they are equal to $b_n$ for some $n$. To start, let
  $a^{-1}_j=\emptyset$ for all $j$.

  Assume we have defined $i_{s-1}$ and $a^{s-1}_j$ for all $j$ and that $k$ is
  largest such that for all $j<k$, $a^{s-1}_j$ is equal to some
  $b_n$. For all $j<k$, set $a^{s}_j=a^{s-1}_j$. Then let
  $(c_{k},\dots)$ be the lexicographical minimal sequence such that
  $a^{s-1}_j\preceq c_j\prec y$ for all $j\geq k$
  and for some $i_s$,
\[ L(y;z,a^{s}_0,\dots,
a^{s}_{k-1},c_{k},\dots;s)\downarrow=i_s.\]
Now, if $i_s=i_{s-1}=0$ or $s=0$, then let $a^{s}_j=c_j$ for all $j>k$. Otherwise,
for all $j>k$, if there is $i>j$ such that $a^{s-1}_j\neq \emptyset$ or $j\leq i_s$, let $a^s_j$
be the unique $b_n\succ c_j$. If not, let $a^{s}_j=a^{s-1}_j$. Notice that there
can only be finitely many $a^{s-1}_j\neq \emptyset$ as $L(-;-;s)$ is continuous. This finishes the
description.

Say that $a_i=\lim_{s} a^s_i$ for all $i$, and consider $L(y;z,a_0,\dots)$. We
claim that $L(y;z,a^0,\dots)=0$ if and only if $z\in [y]_{E}=[x]_E$. Suppose
$z\in [y]_{\E}$, then there is a least stage $s_0$ such that for all $s>s_0$
$L(y;z,a_0,\dots;s)=i$ implies $a_i\E y_{\E}$. Suppose that $i\neq 0$. Then at
stage $s_0+1$, we declare $a_i=b_n\not\in
[y]_{\E}$ contradicting the learnability of $\E$. On the other hand, if
$L(y;z,a_0,\dots)=0$, then there is a least $s_0$ such that for all $s>s_0$,
$L(y;z,a_0,\dots;s)=0$. Hence, for some large enough $t$, $a_t=y$ and thus $z\in [y]_E$.

To see that $[x]_E$ is $\bSigma^0_2$, let $R$ be the predicate recursive in
$y\oplus \bigoplus b_n$ such that $R(z,s)=i$ if and only if
$L(y;z,a^0_s,\dots;s)=i$. Then $z\in [x]_E$ if and only if $\exists m(\forall
n>m) R(z,n)=0$.
  
\end{proof}
The definition of the $\E$-classes in \cref{lem:bcs2} is inherently non-uniform.
We proceed by showing that for all but countably many classes we can obtain a
uniform definition, thus obtaining that $\E$ is Borel.
\begin{lemma}\label{lem:bcBorel}
  Let $\E$ be BC-learnable, then $\E$ is Borel.
\end{lemma}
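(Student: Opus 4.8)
The plan is to upgrade the non-uniform, class-by-class $\bSigma^0_2$ description from \cref{lem:bcs2} into a Borel description of the full relation $\E$. The key insight is that the construction in \cref{lem:bcs2} is parametrized by the choice of the witness $y$ together with the sequence $(b_n)$ of points escaping $[x]_E$ near $y$. If we could choose this data uniformly, i.e. as a Borel function of the equivalence class, then the predicate $R$ from that lemma would give a uniform $\bSigma^0_2$ (hence Borel) definition of $\E$. So first I would isolate exactly which classes cause trouble: the construction in \cref{lem:bcs2} only requires a boundary point $y\in[x]_E\setminus\operatorname{int}([x]_E)$, and it degenerates precisely when $[x]_E$ is open, i.e. when $y$ cannot be found. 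I would argue that there can be only countably many open equivalence classes, since $2^\omega$ is second countable and distinct open classes are disjoint nonempty open sets, so each contains a distinct basic open set from a countable basis.

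Next, I would handle the countably many open classes separately: each open class is automatically $\bSigma^0_2$ (indeed $\bSigma^0_1$), and restricting $\E$ to the union $U$ of the open classes, the relation is clearly Borel there (it is the "same open piece" relation on a Borel set carved out by countably many clopen-ish pieces). The remaining work is on the Borel-measurable complement $X\setminus U$, where every class is non-open and thus admits a boundary point. For these classes I would try to make the data $(y,(b_n))$ depend Borel-measurably on a representative. The natural approach is a uniformization: define a Borel set $P\subseteq X\times X^\omega$ consisting of pairs $(x,\vec b)$ where $\vec b$ is an admissible escaping sequence for a boundary point of $[x]_E$, verify that the construction of \cref{lem:bcs2} works from such $\vec b$, and then apply a Borel uniformization theorem (Jankov–von Neumann or, if the sections are nicely structured, Luzin–Novikov) to select $\vec b$ as a Borel function of $x$. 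Plugging this Borel selection into the predicate $R$ of \cref{lem:bcs2} yields a single Borel formula $x\E z\iff\exists m\,\forall n>m\,R(z,n)=0$ that works uniformly off the countable exceptional set.

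The main obstacle I expect is the uniformization step, and more precisely ensuring that the escaping data $\vec b$ can be chosen measurably and that the resulting definition is genuinely \emph{invariant} on each class rather than depending on the chosen representative $x$ versus $z$. The construction in \cref{lem:bcs2} is delicate and interleaves the learner's outputs with the choices of the $a^s_j$, so I would need to check that its correctness (the equivalence "$L(y;z,\dots)=0\iff z\in[y]_E$") is preserved under a Borel-uniform choice of witnesses, and that the set of admissible witnesses is Borel with nonempty sections on $X\setminus U$. A cleaner alternative, which I would pursue if the direct uniformization proves awkward, is to avoid selecting witnesses altogether: show directly that $\{(x,z):x\E z\}$ is Borel by writing membership as "there \emph{exists} admissible data $(y,\vec b)$ for $x$ such that the \cref{lem:bcs2} procedure returns $0$ on $z$", and then argue this analytic definition is in fact Borel because its complement has the same form with the roles of $x$ and $z$ swapped (using symmetry and transitivity of $\E$), invoking Luzin's separation theorem to conclude $\E\in\boldsymbol{\Delta}^1_1=$ Borel.
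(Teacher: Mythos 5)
There is a genuine circularity in the central step of your plan. Both your uniformization route and your ``cleaner alternative'' require the set $P$ of pairs $(x,\vec b)$ with $\vec b$ an \emph{admissible escaping sequence} to be Borel (or at least $\Sigma^1_1$). But admissibility, as needed for the correctness of the procedure in \cref{lem:bcs2}, includes the condition $b_n\notin[x]_E$ --- the proof there breaks down exactly when some $a_i$ that the procedure freezes on happens to lie in $[y]_E$ after all. The condition $b_n\notin[x]_E$ is the complement of the very relation $\E$ whose Borelness you are trying to establish; \cref{lem:bcs2} only gives each class a $\bSigma^0_2$ definition \emph{non-uniformly} (the definition depends on the witness data), so there is no prior bound on the complexity of $\{(x,b):b\notin[x]_E\}$, and hence no reason $P$ is Borel or analytic. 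The same circularity defeats the Suslin/separation variant: to write $x\E z$ as ``$\exists$ admissible $(y,\vec b)$ such that the procedure returns $0$'' you would already need $\neg\E$ to be analytic. There is also a secondary technical problem: even granting $P$ Borel, Jankov--von Neumann only yields a $\sigma(\Sigma^1_1)$-measurable selector (not Borel), and Luzin--Novikov is unavailable since the sections of $P$ have size continuum; and your exceptional set should consist of all classes with non-empty \emph{interior}, not just the open ones, if you want an arbitrary representative to serve as a boundary point.

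The missing idea, which is how the paper escapes the circularity, is to make the witness data \emph{canonical} rather than selected: take $y=x$ itself and $b_n=x\restrict n$ (extended by zeros). Then $y$ is automatically a boundary point provided $[x]_E$ has empty interior, and $b_n\notin[x]_E$ holds automatically provided $[x]_E$ contains no element with only finitely many non-zero bits --- no membership test in $[x]_E$ is ever performed. Classes violating either proviso are declared exceptional; there are only countably many of each kind (disjoint non-empty interiors in a second countable space, and countably many classes meeting a fixed countable set), each is Borel by \cref{lem:bcs2}, and the relation is then ``same exceptional class, or both non-exceptional and the now-uniform procedure converges to $0$,'' which is Borel. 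If you want to repair your write-up, replace the uniformization step with this canonical choice of $(y,\vec b)$ and enlarge your exceptional family accordingly.
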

\begin{proof}
  Suppose that $\E$ is an equivalence relation on $2^\omega$. Then there are
  only countably many classes that have non-empty interior and only countably
  many classes that contain elements with only finitely many non-zero bits.
  We call these classes \emph{exceptional} as those are the classes for which we
  will need non-uniform definitions.
  
  Let $[x]_E$ be a non-exceptional class, then the procedure given in
  \cref{lem:bcs2} can be turned uniform by choosing $y=x$ and $b_n=x\restrict
  n$. Let $R'$ be the recursive predicate for the modified procedure in
  \cref{lem:bcs2}, then for $x,y$ in a non-exceptional class, $x\E y$ if and only if
  the modified procedure from \cref{lem:bcs2} returns $0$ in the limit on $x$ and $y$, i.e.,
  \[x\E y \iff \exists m(\forall n>m) R'(x,y,n)=0.\] 
  To see that $\E$ is Borel note that $x\E y$ if and only if $x$ and $y$ are in
  the same exceptional class or neither are exceptional and $x\E y$ via the
  procedure $R'$. This gives a Borel definition as required.
\end{proof}
While the definition given in \cref{lem:bcBorel} is Borel it is not
$\bSigma^0_2$, as saying that neither class is exceptional is $\bPi^0_2$. 
However, combining \cref{lem:bcBorel} with recent results of Lecomte~\cite{lecomte2020complexity} we 
are now ready to show that the BC-learnable and learnable equivalence relations
are the same.

\begin{theorem}\label{thm:bclearning}
A Borel equivalence relation $E$ is \emph{BC}-learnable if and only if $E$ is learnable if and only if $E$ is $\bSigma^0_2$.
\end{theorem}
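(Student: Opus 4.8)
The plan is to establish the chain of equivalences by combining the results already proven. The cleanest route exploits the fact that we have \cref{thm:sigma2ifflearnable}, which tells us that learnable (for Borel $E$, learnable by some $a$-computable learner for some $a$) is equivalent to being $\bSigma^0_2$. Since every learner is $x$-computable for some $x \in 2^\omega$, the statement ``$E$ is learnable'' is exactly ``$E$ is $\bSigma^0_2$'' (boldface), and likewise ``$E$ is BC-learnable'' means BC-learnable by some $a$-computable BC-learner. So two of the three implications are free: learnable implies BC-learnable is immediate from the definitions (a limit-correct learner is in particular almost-always-correct), and $\bSigma^0_2$ implies learnable is the $(\Leftarrow)$ direction of \cref{thm:sigma2ifflearnable}. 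It therefore suffices to close the loop by proving that BC-learnable implies $\bSigma^0_2$.

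For the remaining implication, the inputs are \cref{lem:bcBorel}, which gives that a BC-learnable $E$ is Borel, and the cited results of Lecomte~\cite{lecomte2020complexity}. First I would invoke \cref{lem:bcs2} to record that every $\E$-class is $\bSigma^0_2$, and \cref{lem:bcBorel} to record that $\E$ is Borel. The subtlety flagged in the text right before the theorem is that the definition produced in \cref{lem:bcBorel} is Borel but genuinely $\bPi^0_2$-looking: the case split ``both $x,y$ lie in non-exceptional classes, and then run $R'$'' carries a $\bPi^0_2$ cost because ``non-exceptional'' (having empty interior and containing no eventually-zero string in its class) is a $\bPi^0_2$ condition, and only countably many classes are exceptional. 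The job is to upgrade this Borel-with-$\bSigma^0_2$-classes situation to a genuinely $\bSigma^0_2$ definition of $E$ itself.

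This upgrade is exactly where Lecomte's work enters, and it is the main obstacle. The relevant fact is a dichotomy/characterization for when a Borel equivalence relation all of whose classes are $\bSigma^0_2$ (equivalently, all $\bPi^0_2$, i.e.\ potentially $\bSigma^0_2$ classes together with Borelness of the relation) is itself $\bSigma^0_2$ as a subset of $X^2$. Concretely, I would cite Lecomte's characterization of the equivalence relations that are potentially $\bSigma^0_2$, or are reducible to a canonical $\bSigma^0_2$ relation, to conclude that a Borel equivalence relation with all classes $\bSigma^0_2$ is $\bSigma^0_2$. The hard part is verifying that our hypotheses match the hypotheses of the cited theorem precisely---in particular that having $\bSigma^0_2$ classes plus Borelness is the right combinatorial input, and that the countably many exceptional classes do not obstruct the global $\bSigma^0_2$ bound (each exceptional class is itself $\bSigma^0_2$ by \cref{lem:bcs2}, and a countable union of $\bSigma^0_2$ pieces controlled by a Borel partition can be folded in).

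Once $E$ is shown to be $\bSigma^0_2$, I would close the cycle:
\[
  E \text{ BC-learnable} \;\Longrightarrow\; E \in \bSigma^0_2 \;\Longrightarrow\; E \text{ learnable} \;\Longrightarrow\; E \text{ BC-learnable},
\]
where the second arrow is \cref{thm:sigma2ifflearnable} and the third is immediate from \cref{def:uniformlearning} and \cref{def:bclearning}. I expect the writing to be short modulo the citation to Lecomte; essentially all the mathematical content has been front-loaded into \cref{lem:bcs2} and \cref{lem:bcBorel}, and the theorem is the assembly step. The one place to be careful is the relativization bookkeeping: the $\bSigma^0_2$ obtained should be boldface (we are free to use the oracle witnessing the learner), so no effectivity is lost and the equivalence with plain ``learnable'' holds without tracking a fixed oracle $a$.
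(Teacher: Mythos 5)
Your outline of the easy implications is right (learnable $\Rightarrow$ BC-learnable is definitional, and $\bSigma^0_2\Rightarrow$ learnable is the $(\Leftarrow)$ direction of \cref{thm:sigma2ifflearnable}), and you correctly identify that the whole content is BC-learnable $\Rightarrow$ $\bSigma^0_2$, with \cref{lem:bcBorel} and Lecomte's work as the inputs. But the mechanism you propose for that step contains a false claim: it is \emph{not} true that a Borel equivalence relation all of whose classes are $\bSigma^0_2$ must itself be $\bSigma^0_2$. Lecomte's relation $\sim_3$ is an explicit counterexample: every $\sim_3$-class has at most two elements (hence is closed, hence $\bSigma^0_2$), the relation is plainly Borel, and yet $\sim_3$ is by construction one of the minimal \emph{non}-$\bSigma^0_2$ Borel equivalence relations. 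Likewise, ``potentially $\bSigma^0_2$'' does not upgrade to ``$\bSigma^0_2$'': $\sim_3$ is smooth, hence potentially closed, but still not $\bSigma^0_2$ in the original topology. So the information gathered from \cref{lem:bcs2} and \cref{lem:bcBorel} alone cannot yield the conclusion; there is no Lecomte theorem of the shape you are hoping to cite.

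What the paper actually does is use Lecomte's dichotomy in the contrapositive: if $E$ is Borel (by \cref{lem:bcBorel}) and not $\bSigma^0_2$, then one of the five relations $\sim_i$, $i\in\{0,1,3,4,5\}$, reduces to $E$ via an injective continuous map. Since BC-learnability pulls back along continuous reductions (compose the learner with the reduction), it suffices to show that none of the $\sim_i$ is BC-learnable. That step is a genuinely new argument, not a consequence of the earlier lemmas: assuming a BC-learner $L$ for $\sim_i$, one feeds it a carefully chosen hypothesis sequence (e.g.\ $1^\infty, f_1, f_2,\dots$ for $\sim_0$, where the $f_j$ enumerate the eventually-zero strings) and reads off a $\bSigma^0_2$ definition of $\INF$ of the form $\exists m(\forall n>m)\,L(\cdots,n)=0$, contradicting the $\bPi^0_2$-completeness of $\INF$. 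This is the missing idea in your proposal: the obstruction to BC-learnability of the $\sim_i$ is not visible at the level of the complexity of individual classes, and must be extracted from the learner itself.
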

Lecomte \cite{lecomte2020complexity} exhibited a finite set of non
$\bSigma^0_2$ equivalence relations that are minimal and form an antichain
under continuous reducibility. Furthermore, every Borel equivalence relation
that is not $\bSigma^0_2$ continuously embeds one of these relations. Our proof
of \cref{thm:bclearning} will show that none of Lecomte's relations are
$BC$-learnable. Before we proceed with the proof let us recall Lecomte's result
in more detail. 

The five equivalence relations can be defined as follows. Let $\INF$ refer to
the class of characteristic functions of all infinite subsets of $\omega$.
That is, $\INF=\{x\in 2^{\omega}:\forall n (\exists m>n) x(m)=1\}$. All the relations below are defined on $2^\omega$.
\begin{itemize}

  \tightlist
  \item 
    $x \mathbin{\sim_0} y\iff (x,y\in \INF) \vee (x=y)$
    \item 
    $ x \mathbin{\sim_1} y\iff (x,y\in \INF) \vee (x,y\notin \INF)$
    \item 
    $ x \mathbin{\sim_3} y\iff (x=y) \vee (x,y\in \INF \land (\forall n>0) x(n)=y(n))$
    \item 
    $ x \mathbin{\sim_4} y\iff x \mathbin{\sim_3} y \vee (x(0)=y(0)=1\wedge
    x,y\notin \INF)$
    \item 
    $ x \mathbin{\sim_5} y\iff x \mathbin{\sim_3} y \vee (x(0)=y(0)\wedge
    x,y\notin \INF)$
\end{itemize}
\begin{theorem}[Lecomte \cite{lecomte2020complexity}]
Let $\E$ be a Borel equivalence relation. Then exactly one of the following holds:
\begin{itemize}
    \item $\E$ is $\bSigma^0_2$.
    \item there is $i\in \{0,1,3,4,5\}$ and an injective continuous reduction from
      $\sim_i$ to $\E$.
\end{itemize}
\end{theorem}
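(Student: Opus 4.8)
The plan is to establish that the two clauses are mutually exclusive and that at least one always holds. Mutual exclusivity is the routine half: each $\sim_i$ fails to be $\bSigma^0_2$. For $\sim_0$ and $\sim_1$ this is visible already in a section --- fixing $y_0\in\INF$, the set $\{x: x\sim_i y_0\}$ equals $\INF$, which is properly $\bPi^0_2$ --- while for $\sim_3,\sim_4,\sim_5$ the witness sits off the diagonal: restricting the graph to $\{x(0)=0\}$ recovers, in each case, a copy of $\INF$. Since $\bSigma^0_2$ is a pointclass closed under continuous preimages, an injective continuous reduction $f$ of $\sim_i$ into a $\bSigma^0_2$ relation $\E$ would exhibit $\sim_i=(f\times f)^{-1}(\E)$ as $\bSigma^0_2$, a contradiction. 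Hence no Borel $\E$ can satisfy both clauses simultaneously.

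For the substantive direction I would assume $\E$ is Borel and not $\bSigma^0_2$ and manufacture one of the five reductions. First I would pass to the effective setting: relativizing to a real coding $\E$, Louveau's theorem relating lightface and boldface Borel ranks lets me assume $\E$ is $\Delta^1_1$ and that its failure to be $\bSigma^0_2$ is witnessed effectively, i.e.\ $\E$ is not lightface $\Sigma^0_2$. I would then work in the Gandy--Harrington topology on $X^2$, generated by the $\Sigma^1_1$ sets, in which $X^2$ is strong Choquet and the $\Delta^1_1$ set $\E$ is $\tau$-clopen; the point is that non-$F_\sigma$-ness of $\E$ localizes, by a Baire-category argument, to a $\Sigma^1_1$ region in which the trace of $\E$ is genuinely $\bPi^0_2$-complete. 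This is precisely the equivalence-relation analogue of the Hurewicz $F_\sigma$ dichotomy, whose set-level form supplies a continuous injection $g\colon 2^\omega\to X$ with $g^{-1}$ of the relevant set equal to $\INF$.

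The core is a Cantor-scheme construction upgrading such a set-level witness to an injective continuous reduction matching one template. I would build a continuous injection from the domain of the appropriate $\sim_i$ by recursion on finite binary strings, selecting images inside shrinking $\Sigma^1_1$ sets so as to control simultaneously injectivity, exactly which pairs fall into $\E$, and the global organization of classes. The case division is driven by how the non-$F_\sigma$-ness is distributed. If some single $\E$-class is itself non-$\bSigma^0_2$, then Hurewicz inside that class forces the $\INF$-points to collapse into one class and leaves the complement free, yielding $\sim_0$ when the complement lands in pairwise distinct classes and $\sim_1$ when it collapses to a second class. If instead every class is $\bSigma^0_2$ while $\E$ is globally non-$F_\sigma$, the complexity lives in the quotient, and the scheme realizes $\sim_3$; the refinements $\sim_4$ and $\sim_5$ then arise according to how the finite-set classes attach to the $\INF$-block, namely whether only the bit-$0$-on strings amalgamate or the amalgamation is governed by the value of the first bit.

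The main obstacle is this last construction together with the proof that the five templates are exhaustive and minimal. Unlike the Hurewicz dichotomy for a single set, the fusion here must preserve an entire equivalence structure --- keeping designated branches $\E$-related while forcing every other pair $\E$-inequivalent --- along a tree of $\Sigma^1_1$ conditions, using strong Choquetness to ensure the branches converge to genuine points at which the relational pattern is as forced. Showing that the finitely many ways $\bPi^0_2$-completeness can interact with reflexivity, symmetry, and transitivity collapse to exactly $\{\sim_0,\sim_1,\sim_3,\sim_4,\sim_5\}$, and that no $\sim_i$ continuously reduces to another so that the list is an antichain, is the genuinely hard combinatorial heart of the theorem.
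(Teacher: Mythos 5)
First, a point of comparison: the paper does not prove this statement at all---it is imported as a black box from Lecomte, and the surrounding text only uses it. So your proposal must be judged on its own merits as a proof of Lecomte's dichotomy, and there it has a genuine gap. The easy half is essentially fine: for $\sim_0,\sim_1$ the section at a fixed $y_0\in\INF$ is $\INF$ itself, and closure of $\bSigma^0_2$ under continuous preimages kills any reduction into a $\bSigma^0_2$ relation. (One repair: your off-diagonal witness for $\sim_3,\sim_4,\sim_5$ should be the map $z\mapsto(0^\frown z,1^\frown z)$, whose preimage of each relation is $\INF$; restricting to $\{x(0)=0\}$ \emph{on both coordinates}, as your phrasing suggests, fails for $\sim_5$, since non-$\INF$ pairs with equal first bit are $\sim_5$-equivalent.)

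The hard direction, however, is a program rather than a proof, and you say so yourself: the fusion of a Cantor scheme along $\Sigma^1_1$ conditions that simultaneously forces designated pairs into $\E$, forces all remaining pairs out of $\E$, and homogenizes the pattern into one of exactly five templates is deferred as ``the genuinely hard combinatorial heart.'' That heart is the theorem. Concretely, two steps you treat as routine are not. (i) In the case where some class is non-$\bSigma^0_2$, Hurewicz gives a copy of $\INF$ inside that class, but the $\E$-pattern among the non-$\INF$ branch points is completely uncontrolled; getting it to homogenize to ``all distinct'' ($\sim_0$) or ``one further class'' ($\sim_1$)---and ruling out every intermediate configuration---requires a substantial Ramsey-type shrinking argument interleaved with the Gandy--Harrington fusion, which strong Choquetness alone does not supply. (ii) In the case where every class is $\bSigma^0_2$ but $\E$ is not, the assertion that the scheme ``realizes $\sim_3$'' with $\sim_4,\sim_5$ as refinements glosses exactly the combinatorics of how the non-$\INF$ branches may amalgamate, which is why the list has five entries rather than three or infinitely many; nothing in your sketch shows these are the only possible limit behaviours compatible with transitivity. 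Lecomte's actual argument runs through his representation-theorem machinery for Borel classes and a delicate effective construction occupying a long paper; a sketch that names the toolbox (relativization via Louveau, Gandy--Harrington topology, Hurewicz-style dichotomy, Cantor scheme) but stops where the difficulty begins cannot be counted as a proof. Incidentally, the antichain/minimality claims in your last paragraph are not needed for the statement as quoted, which only asserts the dichotomy.
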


\begin{proof}[Proof of Theorem \ref{thm:bclearning}.] 
  Since every learnable
  equivalence relation is also BC-learnable, we only have to deal with the other
  implication. Our proof proceeds by showing that each of Lecomte's equivalence
  relations is not BC-learnable. Since BC-learnability is preserved by
  continuous reductions, this implies that the BC-learnable Borel equivalence
  relations are precisely the learnable Borel equivalence relations.
  Using \cref{lem:bcBorel} we can then infer \cref{thm:bclearning} in its full
  generality.

  It remains to show that none of the $\sim_i$, $i\in\{0,1,3,4,5\}$ are
  BC-learnable. In all five cases, we achieve that by exploiting the fact that
  $\INF$ has no $\bSigma^0_2$ definition. 
  The argument are analogous. We assume that $\sim_i$ is
  BC-learnable and use this assumption to obtain a $\bSigma^0_2$ definition of
  $\INF$, which gives a contradiction. 

  Suppose that $\sim_0$ is BC-learnable by $L$. We claim that the following
  is a $\bSigma^0_2$ definition of $\INF$: 
  \[x\in \INF\iff\exists m (\forall
  n>m) L(x;1^{\infty},f_1,f_2,\ldots,n)=0.\]
  where $f_1,f_2,\ldots$ is an
  enumeration of all elements of $2^{\omega}$ with finitely many ones. Note that $x$
  is either finite and identical to some $f_i$, or infinite and $\sim_0$-equivalent to
  $1^{\infty}$. Hence, if $L$ is a BC-learner for $\sim_0$, then $L$ converges
  on $0$ if $x$ is infinite and to some $k\neq 0$ otherwise. A similar argument
  shows that the above formula gives a $\bSigma^0_2$ definition for $\sim_1$ if we assume that $L$ is a $BC$-learner for
  $\sim_1$. 

  Similarly, if we assume that $L$ is a BC-learner for $\sim_i$, $3\leq i\leq
  5$, then the following formula gives a $\bSigma^0_2$ definition of $\INF$:
  \[x\in \INF\iff
  \exists m (\forall n>m)L(0\concat x;1\concat x,0\concat f_1,0\concat f_2,\ldots,n)=0.\]

\end{proof}

\subsection{Borel learnability}
In modern
descriptive set theory Borel functions arguably play a more prominent 
role than continuous functions. The reason for this is the following
classical change of topology theorem. If $f:(X,\sigma)\to Y$ is Borel, then there is a Polish topology
$\tau\supset\sigma$ such that the Borel sets generated by $\tau$ and $\sigma$
are the same and $f:(X,\tau)\to Y$ is continuous; see~\cite[Chapter
13]{kechris2012} for a proof.

The following modification of uniform learnability might thus look more
natural to a descriptive set theorist.
\begin{definition}\label{def:borellearnability}
  Let $E$ be an equivalence relation on a Polish space $X$ and assume $\omega$ is equipped with the discrete topology. 
  We say that $E$ is \emph{Borel learnable}, if there are Borel 
  functions $l_n: X^\omega\times X\to \omega$ such that for $x\in X$ and
  $\vec x=(x_i)_{i\in\omega}\in X^\omega$, if $x\E x_i$ for some $i\in\omega$,
  then $L(\vec x,x)=\lim l_n(\vec x, x)$ exists and $x \E x_{L(\vec x,x)}$. We refer to the partial function $L$ as a \emph{Borel learner}
  for $E$.
\end{definition}
Combining the above change of topology theorem with the boldface version of
\cref{thm:sigma2ifflearnable} we immediately obtain that an equivalence
relation $E$ on a Polish space $(X,\sigma)$ is Borel learnable if and only if
there is $\tau\supseteq \sigma$ such that $E$ is $\bSigma^0_2$ in $(X,\tau)^2$.
Following Louveau~\cite{louveau1995}, we say that such an equivalence relation
is \emph{potentially $\bSigma^0_2$} and thus get the following.
\begin{theorem}\label{thm:borellearnableiffpotsigma2}
  An equivalence relation $E$ is Borel learnable if and only if $E$ is
  potentially $\bSigma^0_2$.
\end{theorem}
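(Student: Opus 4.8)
The plan is to prove both implications by transferring between the Borel world over $(X,\sigma)$ and the continuous world over a suitably refined Polish topology $\tau$, using the boldface version of \cref{thm:sigma2ifflearnable}: an equivalence relation on a Polish space is (continuously) learnable if and only if it is $\bSigma^0_2$, obtained from the lightface statement by taking the union over all oracles $a$. Throughout I use that if $\tau\supseteq\sigma$ is a finer Polish topology with the same Borel sets, then $(X,\tau)^\omega\times(X,\tau)$ and $(X,\sigma)^\omega\times(X,\sigma)$ carry the same Borel $\sigma$-algebra, so a map into the discrete space $\omega$ is $\tau$-Borel exactly when it is $\sigma$-Borel.

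For the easy direction, suppose $E$ is potentially $\bSigma^0_2$, witnessed by $\tau\supseteq\sigma$ with $E\in\bSigma^0_2((X,\tau)^2)$. Applying the boldface form of \cref{thm:sigma2ifflearnable} in the Polish space $(X,\tau)$ produces a learner $L=\lim_n l_n$ whose approximations $l_n\colon(X,\tau)^\omega\times(X,\tau)\to\omega$ are $\tau$-continuous. Being $\tau$-continuous, each $l_n$ is $\tau$-Borel, hence $\sigma$-Borel by the remark above, so $L$ is a Borel learner for $E$ over $(X,\sigma)$ and $E$ is Borel learnable.

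For the converse, suppose $E$ is Borel learnable via Borel approximations $l_n\colon X^\omega\times X\to\omega$. The strategy is to refine $\sigma$ to a finer Polish topology $\tau$ on $X$, with the same Borel sets, so that every $l_n$ becomes continuous for the induced product topology on $(X,\tau)^\omega\times(X,\tau)$. Granting this, the $l_n$ witness that $E$ is continuously learnable over $(X,\tau)$, whence the boldface form of \cref{thm:sigma2ifflearnable} gives $E\in\bSigma^0_2((X,\tau)^2)$, i.e. $E$ is potentially $\bSigma^0_2$. To produce $\tau$ I would invoke the simultaneous version of the change-of-topology theorem for the countably many Borel maps $l_n$ and verify that the resulting topology on $X$ induces a product topology in which they are continuous.

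The genuinely delicate point is exactly this last refinement, and it is the step I expect to be the main obstacle. The change-of-topology theorem quoted in the text refines the topology on the \emph{domain} of a single Borel function, whereas here the $l_n$ live on the product $X^\omega\times X$, and to keep the phrase ``$E$ is $\bSigma^0_2$ in $(X,\tau)^2$'' meaningful we may only refine the topology of the \emph{single} space $X$ and then pass to the induced product. This is not automatic: a generic Borel function on $X^\omega$ (for instance the $\bPi^0_2$ condition that infinitely many coordinates equal a fixed point) cannot be made continuous under \emph{any} product refinement, since basic open sets in a product constrain only finitely many coordinates. The argument must therefore exploit that a learner's approximations can be arranged to read the hypothesis coordinatewise through Borel predicates of individual reals — e.g. by first replacing $(l_n)$ with an equivalent \emph{bounded-use} learner each of whose approximations depends on only finitely many coordinates via countably many Borel subsets of $X$, which are then declared clopen in $\tau$. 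Making this reduction precise is where the real work lies; once the learner is continuous over $(X,\tau)$, the convergence-forcing and use-principle arguments underlying \cref{thm:sigma2ifflearnable} carry over verbatim in $(X,\tau)$ to yield the $\bSigma^0_2$ definition.
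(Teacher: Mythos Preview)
Your approach is exactly the paper's: the theorem is derived there in a single sentence, as an ``immediate'' consequence of the quoted change-of-topology theorem together with the boldface form of \cref{thm:sigma2ifflearnable}, with no further detail given. You have in fact been more careful than the paper in isolating the one nontrivial point in the harder direction---that the change-of-topology theorem as stated refines the topology on the full domain $X^\omega\times X$, while ``potentially $\bSigma^0_2$'' demands a refinement of the single factor $X$ followed by passage to the product; the paper does not engage with this at all. Your proposed remedy (normalize to a bounded-use Borel learner, then declare the relevant countably many Borel subsets of $X$ clopen) is the natural line, though it too needs justification: even a Borel map on a \emph{finite} power $X^k$ need not become continuous under any product refinement of $X$ (the indicator of the diagonal is already a counterexample), so the reduction must exploit the specific limiting structure of a learner, not merely finiteness of use.
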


One can also consider the notion of non-uniform Borel learnability by
adapting~\cref{def:nonuniformlearning} in the obvious way. We then get the
following from another classical change of topology theorem.
\begin{proposition}\label{prop:eqclassesBorelsep}
  Let $\E$ be an equivalence relation on a Polish space $X$. Then the following are equivalent.
  \begin{enumerate}
    \item $\E$ is non-uniformly Borel learnable. 
    \item\label{it:Borelsep} For every $\vec x\in X^\omega$, there are sets $S_i$, $S_i\in Borel(X)$ such
  that for every $i\in\omega$, $[x_i]\subseteq S_i$ and $S_i\cap S_j=\emptyset$
  if $x_i\not \E x_j$.
  \end{enumerate}
  Furthermore, $\E$ is non-uniformly learnable by learners not giving false
  positives if and only if $[x]$ is Borel for every
  $x\in X$.
\end{proposition}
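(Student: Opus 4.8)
The plan is to reduce everything to the already-proved $\bSigma^0_2$ version, \cref{prop:eqclassesS2sep}, by invoking the classical change of topology theorem: for any Polish space $(X,\sigma)$ and any countable family $(B_k)_{k\in\omega}$ of Borel subsets of $X$, there is a finer Polish topology $\tau\supseteq\sigma$ generating the same $\sigma$-algebra of Borel sets in which every $B_k$ is clopen (see~\cite[Chapter 13]{kechris2012}). Since both conditions in the proposition are stated \enquote{for every $\vec x$}, it suffices to work with one fixed hypothesis $\vec x\in X^\omega$ at a time and to choose the refined topology depending on $\vec x$. The observation that makes the transfer work is that a map into the discrete space $\omega$ is $\tau$-continuous exactly when each preimage of a point is $\tau$-clopen, while every $\tau$-clopen set is $\sigma$-Borel; hence \enquote{Borel in $\sigma$} and \enquote{continuous in a suitable $\tau$} are interchangeable for the objects at hand.

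For (1)$\implies$(2), fix $\vec x$ and a Borel learner $l_n(\vec x)\colon X\to\omega$. The sets $l_n(\vec x)^{-1}(i)$, for $n,i\in\omega$, form a countable family of Borel sets; let $\tau$ be a finer Polish topology, with the same Borel sets as $\sigma$, in which all of them are clopen. Then each $l_n(\vec x)$ is $\tau$-continuous, so in the Polish space $(X,\tau)$ the learner witnesses that $\E$ is non-uniformly learnable on the hypothesis $\vec x$. Applying the direction (1)$\implies$(2) of \cref{prop:eqclassesS2sep} inside $(X,\tau)$ yields sets $S_i\in\bSigma^0_2(X,\tau)$ with $[x_i]\subseteq S_i$ and $S_i\cap S_j=\emptyset$ whenever $x_i\not\E x_j$. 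As $\tau$ and $\sigma$ generate the same Borel $\sigma$-algebra, these $S_i$ are Borel in $(X,\sigma)$, which is exactly~\eqref{it:Borelsep}.

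For (2)$\implies$(1), fix $\vec x$ and Borel sets $S_i$ as in~\eqref{it:Borelsep}. The family $(S_i)_{i\in\omega}$ is countable, so again choose a finer Polish topology $\tau$, with the same Borel sets, in which every $S_i$ is clopen, and in particular $\bSigma^0_2(X,\tau)$. By the direction (2)$\implies$(1) of \cref{prop:eqclassesS2sep} applied in $(X,\tau)$ there is a $\tau$-continuous learner $L(-,\vec x,-)$ for $\E$ on $\vec x$. Each $l_n(\vec x)^{-1}(i)$ is then $\tau$-clopen, hence $\sigma$-Borel, so $L(-,\vec x,-)$ is a Borel learner in $(X,\sigma)$. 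Since $\vec x$ was arbitrary, $\E$ is non-uniformly Borel learnable.

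For the \enquote{furthermore} clause (read in its Borel form), the crucial point is that producing no false positives concerns only the convergence values of the learner and is independent of the ambient topology, so it is preserved under both passages above; routing the two transfers through the \enquote{furthermore} part of \cref{prop:eqclassesS2sep} shows that $\E$ is non-uniformly Borel learnable without false positives iff each $[x]$ is $\bSigma^0_2$ in the corresponding $\tau$, equivalently Borel in $\sigma$. The only thing requiring care, rather than a genuine obstacle, is bookkeeping: the topology refinement is chosen separately for each $\vec x$ (there is no single topology clopen-izing all relevant sets uniformly in $\vec x$), so one must check that both conditions are genuinely local in $\vec x$ — which they are — and that in each case only countably many Borel sets need to be made clopen (the preimages $l_n(\vec x)^{-1}(i)$, respectively the $S_i$), so that the countable form of the change of topology theorem applies.
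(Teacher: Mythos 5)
Your proof is correct and follows essentially the same route as the paper's: both directions reduce to \cref{prop:eqclassesS2sep} via a change of topology, refining $\sigma$ so that the relevant countable family of Borel sets (the preimages $l_n(\vec x)^{-1}(i)$ in one direction, the $S_i$ in the other) becomes clopen, and then transferring back using the fact that $\tau$ and $\sigma$ generate the same Borel sets. The only difference is cosmetic --- you invoke the countable-family version of the theorem in both directions where the paper cites the ``Borel function becomes continuous'' version for (1)$\implies$(2) --- and you additionally spell out the ``furthermore'' clause, which the paper leaves implicit.
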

\begin{proof}
  Suppose we have a learner $L$ learning $\vec x\in X^\omega$. Using the
  change of topology theorem mentioned above, we get a topology $\tau$ on $X$
  such that the learner $L$ is continuous. Hence, by \cref{prop:eqclassesS2sep},
  there are $S_i$ separating the equivalence classes $[x_i]$ such that every $S_i$ is
  $\bSigma^0_2$ in $\tau$. But then as the Borel sets in $\tau$ and the original
  topology are the same, the $S_i$ are Borel in the original topology on $X$.

  On the other hand, recall another classical change of topology theorem: If
  $(A_i)_{i\in\omega}$ is a countable sequence of Borel sets on $X$, then there
  is a refinement $\tau$ of $X$ generating the same Borel sets, such that every
  $A_i$ is clopen, see again~\cite[Chapter 13]{kechris2012}. Hence, given $S_i$
  as in \cref{it:Borelsep} we get a topology $\tau$ with all $S_i$ clopen and a learner $L$ for $\vec x$ that is
  $\tau$-continuous, and hence Borel.
\end{proof}

\section{Complexity of learnability}\label{sec:complexityoflearning}
For the proofs of the following results, we rely on Louveau's separation theorem and the Spector-Gandy theorem. Louveau's separation theorem says that if $X$ is a recursive Polish space and $A_0,A_1$ are two disjoint $\Sigma^1_1$ sets that are $\bSigma^0_\alpha$ separated for a recursive ordinal $\alpha$, then there is a $\Sigma^0_\alpha(HYP)$ set separating $A_0$ from $A_1$~\cite{louveau1980}.
In particular, if $X$ is $\Delta^1_1$ and $\bSigma^0_\alpha$, then $X$ is $\Sigma^0_\alpha(HYP)$. To see this, just take $A_0=X$ and $A_1=\bar X$.
We will combine this with the classic Spector-Gandy theorem. The version most suitable for our purpose says that a set $X\subseteq 2^\omega$ is $\Pi^1_1$ if and only if there is a recursive predicate $R$ such that
\[ x\in X \iff (\exists z \leq_h x)R(x,z).\]
We suggest~\cite{chong2015} for a detailed discussion of this theorem.
The following result is a consequence of Louveau's separation theorem and \cref{thm:sigma2ifflearnable}.
\begin{lemma}\label{lem:hyplearner}
Let $E$ be a $\Delta^1_1$ equivalence relation and learnable. Then there is a hyperarithmetical learner learning $E$.
\end{lemma}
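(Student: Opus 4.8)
The plan is to run a short bootstrap between the two directions of \cref{thm:sigma2ifflearnable} and Louveau's separation theorem. The point is that learnability already fixes the relative-lightface complexity of $E$, and the additional hypothesis that $E$ is $\Delta^1_1$ lets us pull the parameter down into $HYP$.

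First I would apply the ($\Rightarrow$) direction of \cref{thm:sigma2ifflearnable}. Since $E$ is learnable, it is learnable by an $a$-computable learner for some real $a$, and hence $E$ is $\Sigma^0_2(a)$; in particular $E$ is $\bSigma^0_2$. Viewing $E$ as a subset of the recursive Polish space $(2^\omega)^2$ (recursively homeomorphic to $2^\omega$), we now have a set that is simultaneously $\Delta^1_1$ and $\bSigma^0_2$.

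Next I would invoke the consequence of Louveau's separation theorem recorded just before the statement: a set that is $\Delta^1_1$ and $\bSigma^0_\alpha$ for a recursive ordinal $\alpha$ is already $\Sigma^0_\alpha(HYP)$. Taking $\alpha=2$, which is recursive, this produces a hyperarithmetical real $h$ with $E\in\Sigma^0_2(h)$. Finally I would feed this back into the ($\Leftarrow$) direction of \cref{thm:sigma2ifflearnable}, relativized to $a=h$: since $E$ is $\Sigma^0_2(h)$, it is learnable by an $h$-computable learner, and as $h\in HYP$ any $h$-computable function is hyperarithmetical, so the learner is hyperarithmetical.

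I do not expect a genuine obstacle here, since the lemma is a clean composition of the stated ingredients. The only point requiring care is bookkeeping: one must apply Louveau's theorem to $E$ as a subset of a recursive Polish space and with the \emph{fixed} recursive ordinal $2$, so that the phrase ``$\Sigma^0_2(HYP)$'' genuinely unpacks as ``$\Sigma^0_2(h)$ for a single hyperarithmetical parameter $h$'' --- which is exactly the form that the characterization theorem consumes when it is relativized to $a=h$.
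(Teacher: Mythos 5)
Your proposal is correct and follows exactly the paper's argument: use the forward direction of \cref{thm:sigma2ifflearnable} to get that $E$ is $\bSigma^0_2$, apply Louveau's separation theorem to conclude $E\in\Sigma^0_2(HYP)$, and feed the resulting hyperarithmetical parameter back into the reverse direction of \cref{thm:sigma2ifflearnable} to obtain a hyperarithmetical learner. Your added remark that ``$\Sigma^0_2(HYP)$'' must be unpacked as ``$\Sigma^0_2(h)$ for a single hyperarithmetical $h$'' is a useful clarification of a point the paper leaves implicit, but it is not a different proof.
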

\begin{proof}
    Recall from \cref{thm:sigma2ifflearnable} that an equivalence relation is learnable by an $x$-computable learner if and only if it is $\Sigma^0_2(x)$. Now by Louveau's separation theorem $\Delta^1_1 \cap \bSigma^0_2=\Sigma^0_2(HYP)$. Thus $E$ being learnable and $\Delta^1_1$ implies that $E$ is $\Sigma^0_2(HYP)$ and so it is learnable by a hyperarithmetical learner.
\end{proof}
We can represent Borel equivalence relations on a Polish space $X$ using their
Borel codes. Recall that a \emph{Borel code} is a labeled tree $T\in
\omega^{<\omega}$ where the leaf nodes are labeled by (the codes) for elements
of 
a fixed subbase of $X$ and internal nodes are labeled by $\cup$ or $\cap$,
indicating that we take unions or intersections of the sets described by the
subtree rooted at that node. We can view labeled trees as elements of
$2^\omega$ and thus can talk about the complexity of the set of Borel
equivalence relations with some property $P$.

\begin{theorem} \label{thm:learnablepi11}
    The set of learnable Borel equivalence relations on Cantor space is $\Pi^1_1$ in the codes.
\end{theorem}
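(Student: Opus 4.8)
The plan is to assemble the three tools laid out just before the statement---the boldface form of \cref{thm:sigma2ifflearnable}, Louveau's separation theorem, and Spector--Gandy. Write $B_T$ for the relation coded by $T$, and recall the standard fact that on the $\Pi^1_1$ set of Borel codes there are $\Pi^1_1$ predicates $P_\in(T,x)$ and $P_{\notin}(T,x)$ with $x\in B_T\iff P_\in(T,x)$ and $x\notin B_T\iff P_{\notin}(T,x)$; in particular $B_T\in\Delta^1_1(T)$ uniformly in $T$. The target set is the intersection of the set of Borel codes, the set of $T$ for which $B_T$ is an equivalence relation, and the set of $T$ for which $B_T$ is learnable. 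The first is $\Pi^1_1$ by hypothesis; the second is $\Pi^1_1$ since reflexivity, symmetry, and transitivity read as $\forall x\, P_\in(T,(x,x))$, $\forall x,y\,(P_{\notin}(T,(x,y))\lor P_\in(T,(y,x)))$, and the evident three-variable universal statement, each a universal real quantifier over a $\Pi^1_1$ matrix. So all the work is in the learnability clause.

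For that clause I would first trade ``learnable'' for a $\Sigma^0_2$ condition with a hyperarithmetic parameter. By the boldface version of \cref{thm:sigma2ifflearnable}, $B_T$ is learnable iff $B_T\in\bSigma^0_2$, and since $B_T\in\Delta^1_1(T)$, Louveau's theorem relativized to $T$ (giving $\Delta^1_1(T)\cap\bSigma^0_2=\Sigma^0_2(HYP(T))$) yields
\[ B_T \text{ is learnable} \iff \exists z\leq_h T\ (B_T\in\Sigma^0_2(z)). \]
The inner predicate ``$B_T\in\Sigma^0_2(z)$'' unwinds to $\exists e\,(B_T=S^z_e)$, where $S^z_e$ is the $e$-th $\Sigma^0_2(z)$ set, and this is $\Pi^1_1$ jointly in $(T,z)$: the equality $B_T=S^z_e$ splits as $\forall x(P_{\notin}(T,x)\lor x\in S^z_e)\land\forall x(x\notin S^z_e\lor P_\in(T,x))$, where membership in $S^z_e$ is arithmetic while $P_\in,P_{\notin}$ are $\Pi^1_1$, so each conjunct is a universal real quantifier over a $\Pi^1_1$ matrix, hence $\Pi^1_1$, and the number quantifier $\exists e$ preserves this.

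The crux is then to collapse $\exists z\leq_h T$ of this $\Pi^1_1$ matrix back down to $\Pi^1_1$; the naive reading gives only $\Sigma^1_2$, and it is precisely the bound $z\leq_h T$ delivered by Louveau that lets Spector--Gandy do the collapse. Viewing ``$B_T\in\Sigma^0_2(z)$'' as a $\Pi^1_1$ predicate of the single real $T\oplus z$, Spector--Gandy relativized to $T\oplus z$ supplies a recursive $R$ with $B_T\in\Sigma^0_2(z)\iff\exists w\leq_h(T\oplus z)\,R(T\oplus z,w)$. Since $z\leq_h T$ forces $T\oplus z\equiv_h T$, the condition $w\leq_h(T\oplus z)$ becomes $w\leq_h T$, and $z\leq_h T\land w\leq_h T$ is the same as $z\oplus w\leq_h T$; merging the two hyperarithmetic quantifiers into a single real $u=z\oplus w$ gives
\[ B_T \text{ is learnable} \iff \exists u\leq_h T\ R'(T,u) \]
with $R'$ recursive, which is $\Pi^1_1$ in $T$ by Spector--Gandy in the form stated above. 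Intersecting the three $\Pi^1_1$ conditions then finishes the argument. The main obstacle I anticipate is the complexity bookkeeping around the hyperarithmetic quantifier---making sure the matrix under $\exists z\leq_h T$ is genuinely $\Pi^1_1$ (equivalently, recursive after one further Spector--Gandy step), since that is exactly what prevents the existential hyperdegree quantifier from pushing the complexity above $\Pi^1_1$.
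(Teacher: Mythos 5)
Your proof is correct and follows essentially the same route as the paper: decompose the condition into being a valid code (plus, in your version, the equivalence-relation check), and handle learnability by using \cref{thm:sigma2ifflearnable} together with Louveau's separation theorem to bound a witness hyperarithmetically in $T$, then invoking Spector--Gandy to keep the resulting $\exists z\leq_h T$ quantifier within $\Pi^1_1$. The only cosmetic difference is that you quantify over hyperarithmetic $\Sigma^0_2$-codes where the paper quantifies over hyperarithmetic learners (via \cref{lem:hyplearner}), and you spell out the quantifier-merging step that the paper leaves implicit.
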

\begin{proof}
  Note that $T$ is a code for a learnable equivalence relation if and only if
  (1) $T$ is well-founded, (2) its labeling is correct, and (3) there is a
  learner learning $E_T$, the equivalence relation represented by $T$.
  The first statement is $\Pi^1_1$, the second arithmetical, and by the
  relativization of \cref{lem:hyplearner}, we have that $E_T$ is learnable if
  and only if there exists a learner hyperarithmetic in $T$. By the
  Spector-Gandy theorem this is a $\Pi^1_1$ statement.
\end{proof}
We will establish that the set of learnable Borel equivalence relations on
Cantor space is $\bPi^1_1$-complete in the codes. Let us quickly recall the
definitions.
Let $X$ and $Y$ be Polish spaces, and $A\subseteq X$, $B\subseteq Y$. Then
$A$ is \emph{Wadge reducible} to $B$, $A\leq_W B$ if there exists a continuous
function $f:X\to Y$ such that for all $x\in X$, $x\in A$ if and only $f(x)\in
B$. For a pointclass class $\Gamma$, and $A\in \Gamma$, $A$ is
\emph{$\Gamma$-complete} if for every $B\in\Gamma$, $B\leq_W A$.
We are now ready to pin down the complexity of the learnable Borel equivalence
relations.

\begin{theorem}\label{thm:pi02learnablecomplete}
   The set of learnable $\bPi^0_2$ equivalence relations on Cantor space is
   $\bPi^1_1$-complete in the codes.
\end{theorem}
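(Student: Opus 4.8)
The upper bound is essentially already in hand: restricting \cref{thm:learnablepi11} to $\bPi^0_2$ codes shows that the learnable $\bPi^0_2$ equivalence relations form a $\bPi^1_1$ set in the codes. Indeed, a $\bPi^0_2$ code is converted to a Borel code computably, and the extra requirement of being an equivalence relation (reflexivity, symmetry, transitivity) only adds universal real quantifiers over a $\bPi^0_2$ matrix, hence is itself $\bPi^1_1$. The work therefore lies in proving $\bPi^1_1$-\emph{hardness}, and my plan is to exhibit a continuous reduction from the set $\mathrm{WF}\subseteq\Cantor$ of (codes for) well-founded trees $T\subseteq\omega^{<\omega}$, a standard $\bPi^1_1$-complete set, to the set of learnable $\bPi^0_2$ equivalence relations in the codes.

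The idea is to attach, along every infinite branch of $T$, a copy of one of Lecomte's non-$\bSigma^0_2$ relations. I would use $\sim_0$, whose single nontrivial class is $\INF$ and which is therefore $\bPi^0_2$ but not $\bSigma^0_2$ (it is one of Lecomte's relations, and the proof of \cref{thm:bclearning} shows it is not learnable). Splitting $\omega$ into two recursive infinite pieces, read each $z\in\Cantor$ as a pair $(z_A,z_B)$; when $z_A\in\INF$ let $\beta(z_A)\in\omega^\omega$ record the gap-lengths between consecutive ones, so that $z_A\mapsto\beta(z_A)$ is a homeomorphism of $\INF$ onto $\omega^\omega$. Define $E_T$ on $\Cantor$ by
\[ z\mathrel{E_T}z' \iff z=z' \ \lor\ \bigl(z_A=z'_A\in\INF \ \land\ \beta(z_A)\in[T]\ \land\ z_B\sim_0 z'_B\bigr),\]
where $[T]=\{\beta:\forall n\ \beta\restrict n\in T\}$. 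On each fibre $\{z_A\}\times\Cantor$ this is equality when $\beta(z_A)\notin[T]$ and is $\sim_0$ (which already contains the diagonal) when $\beta(z_A)\in[T]$, and points in distinct fibres are never related; a routine check using transitivity of $\sim_0$ then shows $E_T$ is an equivalence relation for every $T$. A complexity count shows $E_T$ is $\bPi^0_2$: the predicate $z_A\in\INF\land\beta(z_A)\in[T]$ is the $\bPi^0_2$ statement that for every $n$ the first $n{+}1$ ones of $z_A$ appear and the induced gap-sequence of length $n$ lies in $T$, and it is conjoined with the closed condition $z_A=z'_A$ and the $\bPi^0_2$ relation $z_B\sim_0 z'_B$. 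Finally, the code for $E_T$ is produced uniformly and continuously from $T$, since $T$ enters only through the clopen tests $\sigma\in T$.

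For correctness, if $T$ is well-founded then $[T]=\emptyset$, the second disjunct never fires, $E_T$ is equality, hence $\bSigma^0_2$ and learnable by \cref{thm:sigma2ifflearnable}. If $T$ is ill-founded, fix $\beta_0\in[T]$ with code $z_A^0\in\INF$; the map $x\mapsto(z_A^0,x)$ is continuous and satisfies $(z_A^0,x)\mathrel{E_T}(z_A^0,y)\iff x\sim_0 y$, exhibiting a continuous reduction of $\sim_0$ into $E_T$. As $\bSigma^0_2$ is closed under continuous preimages and $\sim_0$ is not $\bSigma^0_2$, it follows that $E_T$ is not $\bSigma^0_2$, hence not learnable. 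Thus $T\in\mathrm{WF}\iff E_T$ is a learnable $\bPi^0_2$ equivalence relation, which together with the upper bound gives $\bPi^1_1$-completeness. I expect the \emph{main obstacle} to be the bookkeeping of the branch-coding: arranging things so that $E_T$ is genuinely $\bPi^0_2$ (not merely Borel) for every $T$ while being exactly $\sim_0$ on the branch fibres, and verifying that dumping the non-coding $z_A$ into singleton classes neither inflates the complexity past $\bPi^0_2$ nor breaks transitivity.
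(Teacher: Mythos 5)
Your proposal is correct and follows essentially the same route as the paper: a continuous reduction $T\mapsto E_T$ from well-founded trees, where $E_T$ is the identity when $T$ is well-founded and, when $T$ is ill-founded, contains a copy of an $\INF$-based non-$\bSigma^0_2$ relation along a branch, so that learnability fails via \cref{thm:sigma2ifflearnable}. The only differences are cosmetic: the paper merges all branch-coding points into one nontrivial class and argues directly that a single equivalence class would give a $\Sigma^0_2(x)$ definition of $\INF$, whereas you keep the fibres separate and phrase the contradiction as a continuous reduction of Lecomte's $\sim_0$.
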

\begin{proof}
Recall that the set of well-founded trees in $\omega^{<\omega}$ is
$\bPi^1_1$-complete. We reduce it
to the set of codes of learnable $\bPi^0_2$ equivalence relations as follows.
Given $x\in 2^\omega$ we might view it as a disjoint union $x=x_1\oplus
x_2$ where $x_1(n)=x(2n)$ and $x_2(n)=x(2n+1)$. We furthermore write $p_x$ for
the principal function of $x$, that is the function enumerating the set of
natural numbers defined by $x$ in order. Notice that $p_x\in \omega^\omega$ if and only if $x$ is infinite, and thus this is already a $\Pi^0_2$
statement. Also recall that the set $\INF=\{\chi_X: X\subseteq \omega,
|X|=\infty\}$ is $\bPi^0_2$-complete. We are now ready to define our reduction. 
Given a tree $T\subseteq \omega^{<\omega}$ define a $\Pi^0_2$ equivalence relation $E_T$
by 
\[ x\mathbin{E_T} y\iff x=y \lor\left(p_{x_1},p_{y_1}\in [T] \land
x_2,y_2\in\INF\right).\]
Notice that if $T$ is well-founded, then $E_T=id_{2^\omega}$. On the other
hand, if $T$ is ill-founded, then we claim that $E_T$ is not learnable. Fix
$x\in [T]$, and consider $[\langle x,1^\infty\rangle]_{E_T}$, the equivalence class of $\langle x, 1^\infty\rangle$. If $E_T$ were learnable, then by \cref{thm:borellearnableiffpotsigma2}, $[\langle x,1^\infty\rangle]_{E_T}$ would be $\Sigma^0_2(x)$. But then as 
\[ y\in \INF \iff \langle x,y\rangle\in [\langle x,1^\infty\rangle]_{E_T}\]
we would get that $\INF$ is $\Sigma^0_2(x)$, a contradiction.
Thus the function $T\mapsto E_T$ is a Borel reduction
(even continuous reduction) from
the set of well-founded trees to the set of codes of $\bPi^0_2$ equivalence
relations that are learnable.
\end{proof}
Since the equivalence relation $E_T$ produced in the above proof is
$id_{2^\omega}$ if $T$ is well-founded, and this equivalence relation is
learnable by a computable learner, we obtain the following effective analog to
\cref{thm:pi02learnablecomplete}.
\begin{corollary}\label{cor:complearnercomplete}
  The set of $\bPi^0_2$ equivalence relations that are learnable by a computable
  learner is $\bPi^1_1$-complete in the codes.
\end{corollary}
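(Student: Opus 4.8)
The plan is to re-examine the reduction $T \mapsto E_T$ already constructed in the proof of \cref{thm:pi02learnablecomplete} and observe that it witnesses the stronger statement with essentially no extra work. Recall that $E_T$ is defined so that $E_T = id_{2^\omega}$ precisely when $T$ is well-founded, and fails to be learnable when $T$ is ill-founded. The key additional observation needed for the corollary is that the map is a \emph{continuous} (hence Borel) reduction from the $\bPi^1_1$-complete set of well-founded trees, and that in the \enquote{positive} case the target equivalence relation is not merely learnable but learnable by a \emph{computable} learner.

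First I would note that the identity relation $id_{2^\omega}$ is $\Sigma^0_2$ with a trivially computable defining predicate, namely $x\, id\, y \iff \exists n\,\forall m\,(m<n \lor x(m)=y(m))$, so by the $(\Leftarrow)$ direction of \cref{thm:sigma2ifflearnable} (with $a$ computable) it admits a computable learner. Thus whenever $T$ is well-founded, $E_T = id_{2^\omega}$ lands inside the set of $\bPi^0_2$ equivalence relations learnable by a computable learner. Conversely, when $T$ is ill-founded the very same argument from the proof of \cref{thm:pi02learnablecomplete} shows $E_T$ is not learnable at all, and \emph{a fortiori} not learnable by a computable learner, since a computable learner is in particular a learner. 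Hence $T$ well-founded $\iff E_T$ is learnable by a computable learner, so the continuous map $T\mapsto E_T$ Wadge-reduces the well-founded trees to the set of codes in question, giving $\bPi^1_1$-hardness.

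For the upper bound I would invoke \cref{cor:complearnercomplete}'s companion, the effective version of \cref{thm:learnablepi11}: the set of codes of $\bPi^0_2$ equivalence relations learnable by a computable learner is itself $\bPi^1_1$. Concretely, a code $T$ names such a relation iff its labeling is correct (arithmetical), the relation $E_T$ it denotes is an equivalence relation (arithmetical once the $\bPi^0_2$ form is fixed), and there exists a \emph{computable} learner for $E_T$. By the $(\Rightarrow)$ direction of \cref{thm:sigma2ifflearnable}, the existence of a computable learner is equivalent to $E_T$ being $\Sigma^0_2$ with a computable defining predicate, and by \cref{thm:borellearnableiffpotsigma2} together with the relativized form used in \cref{thm:learnablepi11} this existential-over-indices-of-Turing-operators statement is $\Sigma^1_1$ over an arithmetical matrix, whose complement combines with the well-foundedness of $T$ to yield a $\bPi^1_1$ condition. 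Combining hardness with this membership bound gives $\bPi^1_1$-completeness.

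I do not expect any serious obstacle, since the heavy lifting is reused from the preceding theorem; the only genuinely new point to verify carefully is the $\enquote{positive}$ case, namely that $id_{2^\omega}$ is learnable specifically by a \emph{computable} rather than merely continuous learner, and this is immediate from the computable $\Sigma^0_2$ presentation above. If anything is delicate it is ensuring that the $\bPi^1_1$ \emph{upper} bound for the computable-learner version is genuinely available in the codes; but this is exactly the content of the effective reasoning behind \cref{thm:learnablepi11} with \enquote{hyperarithmetic in $T$} specialized to \enquote{computable}, so it presents no new difficulty.
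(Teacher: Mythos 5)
Your proposal is correct and follows essentially the same route as the paper: the paper justifies this corollary in one sentence by observing that the reduction $T\mapsto E_T$ from \cref{thm:pi02learnablecomplete} already lands on $id_{2^\omega}$ (computably learnable) in the well-founded case and on a non-learnable relation otherwise. Your additional care about the $\bPi^1_1$ upper bound (the matrix ``$E_T$ admits a lightface $\Sigma^0_2$ definition via index $e$'' is a universal real quantifier over an arithmetical condition, and the outer $\exists e$ is a number quantifier, so the whole condition stays $\bPi^1_1$) is sound, even though the paper leaves that part implicit.
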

\cref{thm:learnablepi11} directly implies the following result.
\begin{corollary}\label{cor:learnablepi11complete}
The set of learnable equivalence relations is $\bPi^1_1$ complete in the
codes.
\end{corollary}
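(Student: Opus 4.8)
The plan is to combine the $\bPi^1_1$ upper bound from \cref{thm:learnablepi11} with the hardness already established in \cref{thm:pi02learnablecomplete}. The upper bound is immediate: \cref{thm:learnablepi11} states precisely that the set of codes of learnable Borel equivalence relations on $\Cantor$ is $\bPi^1_1$. It therefore only remains to establish $\bPi^1_1$-hardness, and for this I would reuse the reduction $T\mapsto E_T$ constructed in the proof of \cref{thm:pi02learnablecomplete}.

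Recall that in that reduction each $E_T$ is a $\bPi^0_2$ equivalence relation, hence in particular Borel, and that $E_T$ is learnable precisely when $T$ is well-founded. Since the set of well-founded trees in $\omega^{<\omega}$ is $\bPi^1_1$-complete, the only thing left to check is that the map sending $T$ to a Borel code of $E_T$ is continuous. This is routine: the defining condition $x\mathbin{E_T} y\iff x=y \lor (p_{x_1},p_{y_1}\in [T] \land x_2,y_2\in\INF)$ is a fixed $\bPi^0_2$ formula in which the tree $T$ enters only as an oracle in the matrix, so one can read off a Borel code for $E_T$ by a continuous (indeed computable) transformation of $T$. Composing this code translation with the reduction from \cref{thm:pi02learnablecomplete} yields a continuous reduction from the well-founded trees to the set of codes of learnable Borel equivalence relations, witnessing $\bPi^1_1$-hardness.

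Together with the $\bPi^1_1$ upper bound from \cref{thm:learnablepi11}, this gives $\bPi^1_1$-completeness in the codes. The only point requiring care is the passage between the $\bPi^0_2$-coding implicit in \cref{thm:pi02learnablecomplete} and the Borel-coding of \cref{thm:learnablepi11}; I expect this to be the sole—and entirely routine—obstacle, amounting to verifying that a uniform $\bPi^0_2$ definition can be rewritten continuously as a Borel code, so that the hardness reduction lands in the correct coding space.
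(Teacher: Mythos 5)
Your argument is correct and is essentially the paper's own proof: the $\bPi^1_1$ upper bound comes from \cref{thm:learnablepi11}, and the hardness is inherited directly from the reduction $T\mapsto E_T$ of \cref{thm:pi02learnablecomplete}, whose image already consists of codes of $\bPi^0_2$ (hence Borel) equivalence relations that are learnable exactly when $T$ is well-founded. The coding translation you single out as the only delicate point is indeed routine and is treated as immediate in the paper.
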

Using the recursion theorem to identify computable codes for Borel equivalence
relations with their indices we get similar results for the index sets of codes
for learnable equivalence relations.
\begin{corollary}\leavevmode
  \begin{enumerate}
\item The set of indices of codes for learnable $\Delta^1_1$ equivalence
  relations is $\Pi^1_1$-complete.
    \item
  The set of indices of codes for computably learnable $\Delta^1_1$ equivalence relations learnable by a
  computable learner is $\Pi^1_1$-complete.
\end{enumerate}
\end{corollary}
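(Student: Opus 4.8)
The plan is to transfer the two completeness results already proved in the codes---\cref{thm:learnablepi11} and \cref{cor:learnablepi11complete} for the learnable case, \cref{cor:complearnercomplete} for the computably-learnable case---to the corresponding index sets, using the standard device that a \emph{computable} Borel code is nothing but a total $\Phi_e$ for some index $e$. Thus an index $e$ belongs to the set in (1) precisely when $\Phi_e$ is total, is a correctly labeled tree, is well-founded, the coded relation is an equivalence relation, and that relation is learnable; (2) is the same with ``learnable'' replaced by ``learnable by a computable learner''.

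\emph{Upper bound.} First I would note that ``$\Phi_e$ is total'' is $\Pi^0_2$, ``$\Phi_e$ is a correctly labeled tree'' is arithmetical, and ``$\Phi_e$ is well-founded'' is $\Pi^1_1$; for such $e$ the coded set is $\Delta^1_1(\Phi_e)$, so checking reflexivity, symmetry and transitivity quantifies universally over reals ranging in a $\Delta^1_1$ set and is again $\Pi^1_1$. For the learnability clause I would invoke the relativization of \cref{lem:hyplearner}: $E_{\Phi_e}$ is learnable iff it has a learner hyperarithmetic in $\Phi_e$, and since $\Phi_e$ is computable this is just ``$E_{\Phi_e}$ has a hyperarithmetic learner'', which is $\Pi^1_1$ in $e$ by Spector--Gandy, exactly as in the proof of \cref{thm:learnablepi11}. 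The conjunction of these $\Pi^1_1$ conditions is $\Pi^1_1$, giving (1). For (2) I would replace the last clause by ``$E_{\Phi_e}$ has a computable learner'', i.e.\ ``$\exists e'\,(\Phi_{e'}\text{ is total and learns }E_{\Phi_e})$''; the inner statement is $\Pi^0_2\wedge\Pi^1_1$ and the number quantifier preserves $\Pi^1_1$, so by \cref{thm:sigma2ifflearnable} (which identifies this with being $\bSigma^0_2$) this is again $\Pi^1_1$ in $e$.

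\emph{Lower bound.} Here I would reuse the continuous reduction $T\mapsto E_T$ from the proof of \cref{thm:pi02learnablecomplete}, whose key feature for the present purpose is that it is effective on indices: by the $s$-$m$-$n$ theorem (this is the ``recursion-theoretic identification of codes with indices'' alluded to above) there is a computable $g$ such that whenever $\Phi_e$ is the characteristic function of a tree $T\subseteq\omega^{<\omega}$, $\Phi_{g(e)}$ is a computable Borel code for $E_T$. For every computable $T$ the relation $E_T$ is $\Pi^0_2$, hence a $\Delta^1_1$ equivalence relation, and by \cref{thm:pi02learnablecomplete} it is learnable iff $T$ is well-founded, in which case $E_T=id_{2^\omega}$, which is learnable by a \emph{computable} learner. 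Hence $g$ is simultaneously a many-one reduction of the set of indices of well-founded computable trees into \emph{both} index sets. Since the latter set is classically $\Pi^1_1$-complete, both index sets are $\Pi^1_1$-hard, and with the upper bound this yields $\Pi^1_1$-completeness.

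The main obstacle, such as it is, lies in the two uniformities: that $T\mapsto E_T$ can be realized by a total computable $g$ on indices (handled by $s$-$m$-$n$ applied to the explicit $\Pi^0_2$ definition of $E_T$), and that substituting the partial object $\Phi_e$ into the boldface $\Pi^1_1$-in-codes predicates of \cref{thm:learnablepi11} and \cref{cor:complearnercomplete} still yields a $\Pi^1_1$ condition (handled by guarding with the $\Pi^0_2$ totality clause). Everything else is assembling results already in hand.
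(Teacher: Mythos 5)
Your proposal is correct and takes essentially the approach the paper intends: the paper justifies this corollary only by the one-line remark preceding it (identify computable codes with their indices), and your argument is exactly the effectivization of \cref{thm:learnablepi11}, \cref{thm:pi02learnablecomplete}, and \cref{cor:complearnercomplete} --- lightface upper bound via Louveau/Spector--Gandy guarded by the totality clause, and hardness by observing that $T\mapsto E_T$ is uniform on indices and yields $id_{2^\omega}$, which is computably learnable, in the well-founded case, so one reduction serves both parts. No substantive divergence from the paper's (implicit) proof.
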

\section{Case study}
In this section, we study whether various equivalence relations arising in logic
are learnable, and, since learnability is closely tied to the Borel hierarchy,
place these equivalence relations in this hierarchy.

\subsection{Equivalence relations arising in computability
theory}
\label{sec:casestudy}
\begin{theorem}\label{thm:turingnotlearnable}
    Turing equivalence is not learnable. In fact, it is not
    $\bPi^0_3$.\footnote{We thank Uri Andrews for pointing out that our proof that
      Turing equivalence is not learnable can be modified to show that it is not
    $\bPi^0_3$.}
\end{theorem}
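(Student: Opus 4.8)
The plan is to show that $\equiv_T$, viewed as a subset of $\Cantor\times\Cantor$, is $\bSigma^0_3$-hard under continuous reductions. Since $\bSigma^0_2\subseteq\bPi^0_3$ and the Borel hierarchy on $\Cantor$ is proper, $\bSigma^0_3$-hardness immediately yields that $\equiv_T$ is neither $\bPi^0_3$ nor $\bSigma^0_2$, and the latter gives non-learnability by the boldface form of \cref{thm:sigma2ifflearnable}. (One first notes the easy upper bound $\equiv_T\in\bSigma^0_3$: the relation $x\leq_T y$ unwinds to $\exists e\,\forall n\,[\Phi_e^y(n){\downarrow}=x(n)]$, an $\exists$ over a $\bPi^0_2$ matrix, and $\equiv_T$ is a conjunction of two such conditions.) For the weaker statement of the theorem it is enough to reduce the $\bPi^0_2$-complete set $\INF$, so I would establish non-learnability first and then isolate the modification that upgrades $\bPi^0_2$-hardness to $\bSigma^0_3$-hardness.

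For the reduction of $\INF$ I fix once and for all a noncomputable $p\in\Cantor$ and define two continuous maps. Let $B(z)$ be produced by the ``writer'' that scans $z(0),z(1),\dots$, copies each bit $z(i)$ to the output, and, whenever $z(i)=1$ is the $k$-th one seen, additionally appends the bit $p(k)$. Each output bit is decided from a finite prefix of $z$, so $z\mapsto B(z)$ is continuous, and $B(z)$ can be parsed left-to-right to recover both $z$ and the inserted bits; hence $B(z)\equiv_T z\oplus(p\restrict M)$, where $M$ is the number of ones of $z$. Let $A(z)=z\oplus p$. If $z\in\INF$ then $M=\infty$, so $B(z)\equiv_T z\oplus p=A(z)$; if $z\notin\INF$ then $z$ has finitely many ones and is therefore computable, so $A(z)\equiv_T p$ is noncomputable while $B(z)\equiv_T z$ is computable, whence $A(z)\not\equiv_T B(z)$. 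Thus $z\mapsto(A(z),B(z))$ continuously reduces $\INF$ to $\equiv_T$, so $\equiv_T$ is $\bPi^0_2$-hard, not $\bSigma^0_2$, and not learnable.

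To upgrade to $\bSigma^0_3$-hardness I would reduce a $\bSigma^0_3$-complete set presented as $\{w:\exists e\,R_e(w)\}$ with $(R_e)_e$ a uniformly $\bPi^0_2$ sequence (equivalently $\{w:\exists n\,w^{[n]}\in\INF\}$), matching the outer existential number quantifier of $\exists e\,\Phi_e^B=A$. The goal is a continuous $w\mapsto(A(w),B(w))$ such that for every $e$ the fixed functional $\Phi_e$ computes $A(w)$ from $B(w)$ exactly when $R_e(w)$ holds; then $A(w)\leq_T B(w)$ if and only if $w\in\{\exists e\,R_e\}$, and composing with the elementary continuous map $(x,y)\mapsto(x\oplus y,y)$ transfers the reduction from $\leq_T$ to $\equiv_T$. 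The $\forall n\,\exists s$ shape of the totality-and-correctness of $\Phi_e^{B(w)}$ is meant to mirror the $\bPi^0_2$ predicate $R_e$, and this is a movable-marker, finite-injury-style construction carried out continuously in $w$, so that each committed bit of $A(w)$ and $B(w)$ depends only on a finite amount of $w$.

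The main obstacle is precisely the distinction between a genuinely infinite witness and infinitely many finite ones. A naive ``disjunction of the base gadgets'', in which each column $n$ reveals $p$ at its own ones, fails: if all columns are finite but of unbounded length they still jointly reveal every bit of $p$, so a single fixed functional recovers $p$ from $B$ and the reduction produces a false positive. The construction must therefore \emph{actively} force $A(w)\not\leq_T B(w)$ whenever all the $R_e(w)$ fail, by diagonalizing every functional against the pair $(A(w),B(w))$, while permitting a functional to succeed---and the corresponding diagonalization to be permanently injured---exactly along an infinite column. Verifying that the intended functional wins precisely when $R_e(w)$ holds and that no unintended functional wins otherwise is the crux of the argument. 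Alternatively, one may simply quote the classical fact that $\{(x,y):x\leq_T y\}$ is $\Sigma^0_3$-complete and invoke only the reduction $(x,y)\mapsto(x\oplus y,y)$ together with the pointclass bookkeeping of the first paragraph.
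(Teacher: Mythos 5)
Your reduction of $\INF$ to $\equiv_T$ is correct, and it establishes the first assertion of the theorem (non-learnability) by a genuinely different route from the paper. The paper's proof is a forcing argument: assuming $\equiv_T$ were defined by a $\Pi^0_3$ formula $\psi(x,y)=\forall k\exists m\forall n\,\phi(x,y,k,m,n)$, it takes a mutually Cohen generic pair (which is not Turing equivalent), finds a condition forcing the $\Sigma^0_3$ negation with a fixed outer existential witness $k_1$, and then uses density of the conditions forcing the inner $\Sigma^0_1$ facts to build a pair of \emph{recursive} reals satisfying $\neg\psi$; since recursive reals are Turing equivalent, $\psi$ cannot define $\equiv_T$, and relativization gives the boldface claim. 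Your gadget $z\mapsto (A(z),B(z))$ avoids forcing entirely: the parsing argument for $B(z)$, the case analysis according to whether $z\in\INF$, and the pointclass bookkeeping all check out, so $\equiv_T$ is not $\bSigma^0_2$ and hence not learnable by \cref{thm:sigma2ifflearnable}. This is a clean, more elementary proof of that half of the statement.

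However, the proposal does not prove the second assertion, that $\equiv_T$ is not $\bPi^0_3$, and this is a genuine gap. What you offer for the upgrade is (a) a plan for a movable-marker/finite-injury construction in which you yourself flag the decisive step as unresolved (``verifying that the intended functional wins precisely when $R_e(w)$ holds and that no unintended functional wins otherwise is the crux''), after correctly observing that the naive column-wise gadget fails because unboundedly long finite columns jointly leak all of $p$; and (b) a fallback appeal to ``the classical fact'' that $\leq_T$ is $\Sigma^0_3$-complete. Option (a) is exactly the hard content of the theorem and is not carried out: making the diagonalization requirements $\Phi_e^{B(w)}\neq A(w)$ cohere with the coding requirements, continuously in $w$ (so with no access to which $R_e(w)$ actually hold), is where all the work lies. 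Option (b) is not available: the $\Sigma^0_3$-completeness of $\leq_T$ (equivalently, of $\equiv_T$) as a pointset is not a standard quotable result---in this paper it is obtained only as a \emph{corollary} of the very theorem you are proving, via Wadge's lemma applied to ``not $\bPi^0_3$,'' and that strengthening is itself credited to a suggestion of Andrews---so quoting it here is circular. It is worth seeing why the paper's method scales while yours does not: with forcing, the extra quantifier costs nothing, since genericity lets one fix the witness $k_1$ and reduce everything to a density argument, whereas a direct Wadge reduction from a $\bSigma^0_3$-complete set requires precisely the injury machinery you sketch but do not supply.
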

\begin{proof}
Assume towards a contradiction that $\equiv_T$ is definable by the $\Pi^0_3$ formula
\[\psi(x,y)= \forall k\exists m\forall n \phi(x,y,k,m,n).\]
Consider a sufficiently mutual generic pair $g_1,g_2$. Then, as $g_1,g_2$ are not
Turing equivalent, there is $(p_1,p_2)$ such that $(p_1,p_2)\forces \exists k
\forall m\exists n \neg\phi(\dot g_1,\dot g_2,k,m,n)$. In particular, there is $k_1$
such that the set 
\[ Q=\{(q_1,q_2): (q_1,q_2) \forces \forall m\exists n \neg \phi(\dot g_1,\dot g_2,
k_1,m,n)\}\]
is dense below $(p_1,p_2)$. Using this fact we can build two recursive reals
$(h_1,h_2)\succ (p_1,p_2)$ such that $\psi(h_1,h_2)$. 

We build $h_1$ and $h_2$
in stages. Say at stage $s$ we have built the finite strings $h_1^s$ and
$h_2^s$. Let $h_1^{s+1}$ and $h_2^{s+1}$ be the lexicographical
minimal pair 
$(q_1,q_2)$ that extends $(h_1^s,h_2^s)$ and satisfies $\exists n \neg\phi(\dot
g_1,\dot g_2,s,n)$. We can always find
such a pair as $Q$ is dense below $(p_1,p_2)$ and the search is recursive since
by the definition of the forcing relation the existential quantifier must be
witnessed by $n< \min \{|q_1|,|q_2|\}$.

As $h_1,h_2$ are recursive and $\neg \psi(h_1,h_2)$ we have that $\psi$ does not
define $\equiv_T$, a contradiction. So $\equiv_T$ is not $\Pi^0_3$.
Relativizing this argument shows that it is not $\bPi^0_3$ and thus also not
learnable.
\end{proof}
By Wadge's lemma~\cite{wadge1983} any set that is not $\bPi^0_3$ is
$\bSigma^0_3$-hard. Hence, we obtain the following corollary.
\begin{corollary}
  Turing equivalence is $\bSigma^0_3$-complete.
\end{corollary}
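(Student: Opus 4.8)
The plan is to assemble two ingredients: an upper bound showing $\equiv_T\in\bSigma^0_3$, and the hardness furnished by Wadge's lemma together with \cref{thm:turingnotlearnable}. Since a set is $\bSigma^0_3$-complete exactly when it lies in $\bSigma^0_3$ and is $\bSigma^0_3$-hard, these two ingredients suffice.

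For the upper bound I would unwind the definition of Turing reducibility. For reals $x,y$ we have $x\leq_T y$ if and only if $\exists e\,\forall n\,\left(\Phi_e^y(n)\downarrow=x(n)\right)$. The inner predicate $\Phi_e^y(n)\downarrow=x(n)$ is $\Sigma^0_1$ in $x\oplus y$, since it asserts that the computation halts (a $\Sigma^0_1$ event) and returns the value $x(n)$. Prefixing $\forall n$ yields a $\Pi^0_2$ predicate, and prefixing $\exists e$ yields a $\Sigma^0_3$ predicate; thus $\{(x,y):x\leq_T y\}$ is $\Sigma^0_3$. As $\equiv_T$ is the intersection of $\{(x,y):x\leq_T y\}$ with $\{(x,y):y\leq_T x\}$ and $\bSigma^0_3$ is closed under finite intersections, $\equiv_T$ is $\bSigma^0_3$.

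For the hardness I would invoke the remark preceding the corollary: by Wadge's lemma any (Borel) set that is not $\bPi^0_3$ is $\bSigma^0_3$-hard. Concretely, given an arbitrary $\bSigma^0_3$ set $B$, Wadge's lemma applied to the Borel sets $\equiv_T$ and $B$ yields either $B\leq_W\mathord{\equiv_T}$ or $\mathord{\equiv_T}\leq_W\overline{B}$; the latter is impossible, as it would place $\equiv_T$ in $\bPi^0_3$ (the class $\bPi^0_3$ being closed under continuous preimages) while $\equiv_T$ is not $\bPi^0_3$ by \cref{thm:turingnotlearnable}. Hence $B\leq_W\mathord{\equiv_T}$ for every $\bSigma^0_3$ set $B$, so $\equiv_T$ is $\bSigma^0_3$-hard. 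Combined with the upper bound, $\equiv_T$ is $\bSigma^0_3$-complete.

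The only genuine subtlety—and the step I would flag as the main point rather than an obstacle—is that Wadge's lemma for Borel sets relies on Borel determinacy; this is unproblematic here since $\equiv_T$ is Borel (indeed $\bSigma^0_3$), so the lemma applies. Everything else is a routine complexity count together with a single appeal to the closure of $\bPi^0_3$ under Wadge reductions.
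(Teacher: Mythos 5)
Your proposal is correct and takes essentially the same route as the paper: the paper also gets the upper bound from the natural $\Sigma^0_3$ definition of $\equiv_T$ and gets hardness by combining \cref{thm:turingnotlearnable} with Wadge's lemma, exactly as in the remark immediately preceding the corollary. Your write-up merely makes explicit the routine details (the quantifier count for $\leq_T$, closure of $\bPi^0_3$ under continuous preimages, and the appeal to Borel determinacy) that the paper leaves implicit.
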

Turing equivalence was a good candidate for being learnable, as it is naturally
$\Sigma^0_3$. Coarser equivalence relations such as arithmetic equivalence or
hyperarithmetic equivalence are quite far from being learnable. The former is
not arithmetical and the latter is not even Borel. We assume that these results
are well-known but let us quickly summarize the overall ideas needed to show
this, which are quite
similar to the ones used in \cref{thm:turingnotlearnable}.

For arithmetic equivalence, assume it was $\Sigma^0_n(x)$ for some $n\in \omega$ and a real $x$. Then, in the product forcing, there is $(p_1,p_2)$ such that every
$n$-generic pair extending $(p_1,p_2)$ is not arithmetic equivalent. However,
one can show that there are arithmetically equivalent $n$-generics, contradicting the above. The proof
for hyperarithmetic equivalence is similar.

Let us now consider the equivalence relations of $m$ and $1$-equivalence. 
Recall that for two reals $x,y$, $x\equiv_m y$ if and only if there exists a
total computable function $\phi_e$ such that $x(n)=1$ if and only if
$y(\phi_e(n))=1$. The two reals $x$, $y$ are $1$-equivalent, $x\equiv_1 y$ if
$x\equiv_m y$ via a 1-1 function. Both of these relations are naturally
$\Sigma^0_3$, $m$-equivalence being defined by
\begin{equation}\tag{*}\label{eq:s31redu} x\equiv_m y \iff \exists e \ \phi_e
\text{ total} \land \forall n
  (x(n)\leftrightarrow y(\phi_e(n)))
\end{equation}
and $1$-equivalence being defined analogously.

\begin{theorem}\label{thm:mequivnotcomplearn}
   Both $\equiv_m$ and $\equiv_1$ are not $\Pi^0_3$.
\end{theorem}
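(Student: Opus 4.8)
The plan is to mirror the structure of the proof of \cref{thm:turingnotlearnable}, using a forcing argument to show that neither $\equiv_m$ nor $\equiv_1$ can be $\bPi^0_3$. Since by \cref{eq:s31redu} both relations are $\Sigma^0_3$, by Wadge's lemma it suffices to produce a contradiction from the assumption that one of them is $\bPi^0_3$. So suppose $x\equiv_m y$ is defined by a $\bPi^0_3$ formula $\psi(x,y)=\forall k\exists m\forall n\,\phi(x,y,k,m,n)$ relative to some parameter, and consider a sufficiently mutually generic pair $g_1,g_2$ in the product Cohen forcing on $2^{<\omega}\times 2^{<\omega}$.

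First I would observe that two sufficiently mutually generic reals $g_1,g_2$ are not $m$-equivalent. This is the key structural difference from the Turing case: I need to verify that mutual genericity genuinely defeats $m$-reducibility. The natural argument is a density computation---for each computable $\phi_e$, the set of conditions $(q_1,q_2)$ forcing $g_1\not\equiv_m g_2$ via $\phi_e$ (i.e., forcing the existence of an $n$ with $g_1(n)\neq g_2(\phi_e(n))$, or forcing $\phi_e$ to diverge) is dense, because from any condition we can extend $q_1$ and $q_2$ on coordinates not yet committed so as to disagree at some fresh $n$ where $\phi_e(n)$ is already decided. Intersecting over all $e$ shows the generic pair is not $m$-equivalent, hence $\neg\psi(g_1,g_2)$ holds.

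Next, exactly as in \cref{thm:turingnotlearnable}, from $\neg\psi(g_1,g_2)$ I extract a condition $(p_1,p_2)$ and a fixed witness $k_1$ such that the set
\[ Q=\{(q_1,q_2):(q_1,q_2)\forces \forall m\exists n\,\neg\phi(\dot g_1,\dot g_2,k_1,m,n)\}\]
is dense below $(p_1,p_2)$. Then I would build, in recursive stages, a pair of computable reals $(h_1,h_2)\succ(p_1,p_2)$ by repeatedly diving into $Q$: at stage $s$ take the lexicographically least extension of the current pair that forces $\exists n\,\neg\phi$ for the next value of $m$, the search being recursive since the forced existential is witnessed below the length of the conditions. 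The resulting $h_1,h_2$ are computable and satisfy $\neg\psi(h_1,h_2)$, yet two computable reals are always $m$-equivalent (indeed both non-recursive-complexity issues aside, any two computable sets that are each nontrivial are $m$-equivalent, and one can arrange the construction so that $h_1,h_2$ are both infinite and coinfinite, hence $h_1\equiv_m h_2$). This contradicts $\neg\psi(h_1,h_2)$, so $\equiv_m$ is not $\bPi^0_3$, and the identical argument with a $1$-$1$ reduction (noting that two infinite-coinfinite computable sets are also $1$-equivalent) handles $\equiv_1$.

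The main obstacle I anticipate is the very last point: I must ensure that the computable reals $h_1,h_2$ produced by the construction really are $m$-equivalent (and $1$-equivalent), since the forcing construction only guarantees $\neg\psi$. This requires controlling the construction so that $h_1$ and $h_2$ are each infinite and coinfinite---a recursive density requirement I can fold into the stagewise extension by interleaving extra bookkeeping steps that force new $1$s and new $0$s into each coordinate---because the cheap fact ``any two computable sets are $m$-equivalent'' is false (e.g.\ $\emptyset$ and a nonempty set), and it is precisely the infinite-coinfinite computable sets that form a single $\equiv_m$-class (and a single $\equiv_1$-class). Securing that these density requirements remain compatible with staying inside $Q$ is the delicate bookkeeping at the heart of the proof.
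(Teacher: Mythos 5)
Your core construction is the one the paper uses: force the $\Sigma^0_3$ negation of the putative definition on a mutually generic pair, fix the outer existential witness, and recursively dive into the resulting dense set $Q$ while interleaving requirements making $h_1,h_2$ infinite and coinfinite, so that the constructed pair lies in the single $\equiv_1$-class (hence $\equiv_m$-class) of infinite coinfinite recursive sets while satisfying the negation of the definition. Your opening step differs only cosmetically: you verify by a direct density computation that mutual generics are not $m$-equivalent, whereas the paper simply observes that they are Turing incomparable and $\equiv_1\subseteq\equiv_m\subseteq\equiv_T$; either works (though note you cannot ``force $\phi_e$ to diverge,'' since the convergence of $\phi_e$ does not depend on the generic---for non-total $\phi_e$ there is simply nothing to defeat).

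There is, however, a genuine error in scope: you set out to prove the \emph{boldface} statement, allowing the $\Pi^0_3$ formula to carry a real parameter, and you invoke Wadge's lemma as if the boldface claim were what is needed. The boldface claim is false: by \cref{thm:mequivtotlearnable} both $\equiv_m$ and $\equiv_1$ are $\Sigma^0_2(\mathrm{TOT})$, hence $\bSigma^0_2$ and a fortiori $\bPi^0_3$. Your argument breaks exactly where it must: once the matrix of the formula is only recursive in a parameter $z$, the ``recursive search'' through $Q$ produces reals $h_1,h_2$ that are recursive in $z$ rather than recursive, and two $z$-recursive infinite coinfinite sets need not be $m$-equivalent (only $m$-equivalent relative to $z$). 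This is precisely the obstruction the paper flags in the paragraph following the theorem, and it is why \cref{thm:mequivnotcomplearn} is a lightface statement, in contrast to \cref{thm:turingnotlearnable}, which does relativize. Restricted to a recursive matrix, your proof is correct and coincides with the paper's; the Wadge/parameter framing should be deleted.
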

\begin{proof}
  The proof of this theorem is almost the same as the proof of
  \cref{thm:turingnotlearnable}.
  Suppose towards a contradiction that $\equiv_1$ is given by the $\Pi^0_3$ formula
  \[ \phi(x,y) \iff \exists l\forall m\exists n R(x,y,l,m,n).\]
  Then $\phi(\dot g_1,\dot g_2)$  cannot be forced by any $(p_1,p_2)$ because
  any two mutually generic $g_1\succ p_1$, $g_2\succ p_2$ are Turing
  incomparable and thus also $g_1\not\equiv_1 g_2$. So again, we can find
  $(p_1,p_2)$ such that $(p_1,p_2)\forces\neg \phi(\dot g_1,\dot g_2)$. Interleaving
  the construction of recursive $h_1\succ p_1$ and $h_2\succ p_2$ with
  construction steps that ensure that $h_1$ and $h_2$ are both infinite,
  coinfinite we get that $h_1\equiv_1 h_2$ as they are characteristic functions
  of recursive infinite, coinfinite sets but $\neg \phi(h_1,h_2)$. Hence, $\equiv_1$ can not be defined
  by $\phi$, a contradiction.
\end{proof}
The reals built in the proof of \cref{thm:mequivnotcomplearn} are 1-equivalent
in the $\Sigma$-outcome and not even Turing equivalent in the $\Pi$-outcome.
This immediately gives that no equivalence relation
that is finer than Turing equivalence and has $\equiv_1$ as a subequivalence
relation is $\Pi^0_3$.
\begin{corollary}
  Say $\E$ is an equivalence relation such that $\equiv_1\subseteq
  \E\subseteq \equiv_T$, then $\E$ is not $\Pi^0_3$.
\end{corollary}

The fact that we could assume that the relation $R$ in the proof of
\cref{thm:mequivnotcomplearn} was recursive, allowed us to build recursive and
infinite coinfinite reals in the $\Sigma$-outcome, thus ensuring that they are
$1$-equivalent.
Without this assumption, we cannot guarantee anymore that the elements built are
both recursive, and thus our proof fails. In fact, if we allowed
$\mathrm{TOT}=\{e:\phi_e \text{ total}\}$ as an oracle, then the definition of
$m$-equivalence in \cref{eq:s31redu} becomes $\Sigma^0_2$. 

\begin{theorem}\label{thm:mequivtotlearnable}
  Both $\equiv_m$ and $\equiv_1$ are $\Sigma^0_2(\mathrm{TOT})$. Hence, they are
  learnable by a $\mathrm{TOT}$-computable learner.
\end{theorem}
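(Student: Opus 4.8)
The plan is to show that $\equiv_m$ and $\equiv_1$ become $\Sigma^0_2$ once we are allowed to quantify over total functions using $\mathrm{TOT}$ as an oracle, and then invoke \cref{thm:sigma2ifflearnable} to conclude learnability. The starting point is the natural $\Sigma^0_3$ definition from \cref{eq:s31redu}:
\[ x\equiv_m y \iff \exists e\ \phi_e \text{ total} \land \forall n\ (x(n)\leftrightarrow y(\phi_e(n))).\]
The obstruction to this being $\Sigma^0_2$ is precisely the clause ``$\phi_e$ total,'' which is $\Pi^0_2$ and sits underneath the outer existential quantifier. The key observation is that with $\mathrm{TOT}$ as an oracle, the predicate ``$\phi_e$ total'' becomes a single oracle query, i.e.\ it is $\Delta^0_1(\mathrm{TOT})$, so it no longer contributes any unbounded quantifiers.

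First I would rewrite the totality clause as membership in the oracle, replacing ``$\phi_e$ total'' by ``$e\in \mathrm{TOT}$.'' Second, I would observe that once $e$ is known to be total, the remaining matrix $\forall n\ (x(n)\leftrightarrow y(\phi_e(n)))$ is genuinely $\Pi^0_1(x\oplus y)$: for each $n$ the value $\phi_e(n)$ is computed by a halting computation (since $e\in\mathrm{TOT}$), so checking $x(n)\leftrightarrow y(\phi_e(n))$ is a recursive predicate of $n$, and the whole clause is a single universal quantifier over $n$. Putting these together, the definition takes the shape
\[ x\equiv_m y \iff \exists e\ \big(e\in \mathrm{TOT} \land \forall n\ R(x,y,e,n)\big),\]
where $R$ is recursive in $x\oplus y$. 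The bounded conjunction of an oracle-membership statement (which contributes no quantifiers) with a $\Pi^0_1$ statement is $\Pi^0_1(\mathrm{TOT})$, and prefixing the existential quantifier over $e$ yields a $\Sigma^0_2(\mathrm{TOT})$ formula. The argument for $\equiv_1$ is identical, with the extra stipulation that $\phi_e$ be injective folded into the recursive matrix $R$ (injectivity on the relevant finite initial segments is recursive once totality is granted).

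Having established that $\equiv_m$ and $\equiv_1$ are $\Sigma^0_2(\mathrm{TOT})$, the learnability conclusion is immediate from the relativized form of \cref{thm:sigma2ifflearnable}: an equivalence relation is learnable by an $a$-computable learner if and only if it is $\Sigma^0_2(a)$, so taking $a=\mathrm{TOT}$ gives a $\mathrm{TOT}$-computable learner for each of $\equiv_m$ and $\equiv_1$.

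\textbf{The main obstacle} I anticipate is purely bookkeeping rather than conceptual: one must be careful that after fixing $e\in\mathrm{TOT}$ the inner matrix really is $\Pi^0_1$ and not secretly $\Pi^0_2$. The subtlety is that the computation of $\phi_e(n)$ must halt, but this is exactly what totality of $e$ guarantees; the potential trap is writing the matrix in a way that re-introduces an unbounded existential search for the halting stage (which would push us back to $\Sigma^0_2$ inside, hence $\Sigma^0_3$ overall). The clean way to avoid this is to phrase $R(x,y,e,n)$ as ``the computation $\phi_e(n)$ converges and its output bit agrees with $x(n)$,'' and to note that under the standing hypothesis $e\in\mathrm{TOT}$ this convergence is automatic, so $R$ is recursive in $x\oplus y$ and no hidden quantifier survives.
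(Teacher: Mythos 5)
Your proposal is correct and matches the paper's argument: the paper proves this theorem via exactly the observation you make, namely that with $\mathrm{TOT}$ as an oracle the totality clause in the natural $\Sigma^0_3$ definition \eqref{eq:s31redu} becomes a single oracle query, the remaining matrix can be written as a $\Pi^0_1$ condition (universally quantifying over halting stages rather than searching for them), and the relativized \cref{thm:sigma2ifflearnable} then yields a $\mathrm{TOT}$-computable learner. Your handling of the potential hidden existential quantifier for the halting stage is precisely the right point of care.
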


\subsection{Jump hierarchies}
Let us recall the notion of a jump hierarchy. Roughly speaking, the Turing jump
assigns to a set $X$ its corresponding halting set $X'$. 
This operation can be iterated to obtain
"halting sets relative to halting sets" $X^{(n)}$ and so on---first, via
finite iterations and then, via transfinite recursion. A jump hierarchy is a set
that encodes in its columns a countable number of iterations of the Turing
jump. In this way, any computable well-order 
 gives rise to a jump hierarchy. 
 \begin{definition}
   Given an infinite computable linear order $\prec$ on natural numbers let
   $[{\prec}n]=\{ m: m\prec n\}$. A set
$H\subseteq\omega$ is a \emph{jump hierarchy supported on $\prec$} if
\[
  \forall n\in\omega\ H^{[n]}=(H^{[{\prec} n]})'.    
  \]
\end{definition}
We can immediately observe that this definition is $\Pi^0_2$. If $\alpha=(\omega,\prec)$ is isomorphic to a computable ordinal, then the corresponding jump hierarchy is uniquely defined. In such cases, we abuse the notation and use $\emptyset^{(\alpha)}$ to denote the unique jump hierarchy supported on $\prec$. In fact, if $\alpha$ is a computable ordinal, then the corresponding jump hierarchy $\emptyset^{(\alpha)}$ is a $\Sigma^0_{1+\alpha}$-complete set. 

Jump hierarchies allow us to exemplify relatively simple, i.e., defined by a
$\Pi^0_2$ formula, equivalence relations, which are uniformly learnable but
which require learners with computational power at arbitrarily high levels of the
hyperarithmetic hierarchy. To this end, given a computable linear order $\prec$
and the corresponding $\Pi^0_2$ class $\mathbf{H}$ consisting of jump
hierarchies supported on $\prec$ we define $\equiv_{\prec}$ as follows. 
\[x\equiv_\prec y \iff x=y\mbox{ or }x^{[1]}=y^{[1]}=z\in \mathbf{H} .\]
\begin{proposition}\label{prop:wfjhlearn}
Given a presentation $(\omega,\prec)$ of an ordinal $\alpha$, the equivalence relation $\equiv_{\prec}$ is uniformly learnable and for every $z$, if $\equiv_{\prec}$ is $z$-learnable, then $z\geq_T 0^{(\alpha)}$.
\end{proposition}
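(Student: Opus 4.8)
The plan is to treat the two assertions separately, in each case routing through the $\Sigma^0_2$ characterization of learnability from \cref{thm:sigma2ifflearnable}.

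For uniform learnability I would first use that, since $(\omega,\prec)$ is a genuine well-order, the jump hierarchy supported on $\prec$ is unique, so $\mathbf H=\{0^{(\alpha)}\}$ is a single real, call it $H$. Consequently the only non-trivial $\equiv_\prec$-class is $C=\{x:x^{[1]}=H\}$, and every other point forms a singleton class. Now $C$ is a \emph{closed} subset of Cantor space, as matching the fixed real $H$ on column $1$ is an intersection of clopen conditions. Hence the graph of $\equiv_\prec$ is $\Delta\cup(C\times C)$, a union of two closed sets and therefore closed; in particular $\equiv_\prec$ is $\bSigma^0_2$ (indeed $\Sigma^0_2(H)=\Sigma^0_2(0^{(\alpha)})$). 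By the boldface form of \cref{thm:sigma2ifflearnable} this gives uniform learnability, and its relativized form provides an $0^{(\alpha)}$-computable learner, matching the lower bound to come.

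For the lower bound, suppose $\equiv_\prec$ is $z$-learnable; by \cref{thm:sigma2ifflearnable} it is then $\Sigma^0_2(z)$. The key move is to pull the singleton $\mathbf H$ back along a computable map. For a real $w$, let $x_w,y_w$ be the reals with $x_w^{[1]}=y_w^{[1]}=w$ that are forced to differ on column $0$ (say column $0$ equal to $0^\infty$, respectively $10^\infty$); then $x_w\neq y_w$, so $x_w\equiv_\prec y_w$ holds exactly when $x_w^{[1]}=y_w^{[1]}=w\in\mathbf H$. Since $w\mapsto(x_w,y_w)$ is continuous, $\mathbf H=\{w:x_w\equiv_\prec y_w\}$ is $\Sigma^0_2(z)$, i.e. $\{H\}=\{w:\exists n\,\forall m\,R(w,n,m)\}$ for some $z$-recursive $R$.

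The final and most delicate step upgrades this $\Sigma^0_2(z)$ singleton to a computation of $H$ from $z$. Fix $n_0$ witnessing the outer existential at $w=H$ and set $P=\{w:\forall m\,R(w,n_0,m)\}$, a $\Pi^0_1(z)$ class. Then $H\in P$, and $P\subseteq\mathbf H=\{H\}$, since any $w\in P$ satisfies the defining $\Sigma^0_2(z)$ condition via the witness $n_0$; hence $P=\{H\}$ is a $\Pi^0_1(z)$ singleton in the compact space $2^\omega$. By the classical compactness argument—for each length $\ell$ all but one of the length-$\ell$ strings eventually have their subtree die out, which is a $z$-c.e.\ event, leaving $H\restrict\ell$ as the survivor—the unique path of a $\Pi^0_1(z)$ singleton is computable from $z$. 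Thus $0^{(\alpha)}=H\leq_T z$, i.e. $z\geq_T 0^{(\alpha)}$. I expect the main obstacle to be precisely this passage from a $\Sigma^0_2(z)$ to a $\Pi^0_1(z)$ description of $\{H\}$: only the \emph{existence} of $n_0$ is needed for the non-uniform conclusion, but one must verify that restricting to the fixed $n_0$ does not enlarge the class past $\{H\}$, which is exactly where uniqueness of the jump hierarchy along a well-order is invoked.
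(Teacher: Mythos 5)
Your proposal is correct and follows essentially the same route as the paper: uniqueness of the jump hierarchy along a well-order gives a $\Sigma^0_2(0^{(\alpha)})$ description and hence learnability via \cref{thm:sigma2ifflearnable}, and for the lower bound you recover $\{0^{(\alpha)}\}$ as a $\Sigma^0_2(z)$ singleton by pairing two reals that share column $1$ but differ elsewhere (the paper uses $\emptyset\oplus x$ versus $\omega\oplus x$), then fix the outer witness to obtain a $\Pi^0_1(z)$ singleton whose unique path is $z$-computable. Your explicit justification that fixing $n_0$ does not enlarge the class beyond $\{0^{(\alpha)}\}$ is exactly the point the paper leaves implicit.
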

\begin{proof}
First, since the jump hierarchy supported on $\prec$ is unique, it is straightforward to see that $\equiv_\prec$ is learnable.
Now, using the definition of $\equiv_{\prec}$ and our characterization
of learnability (Theorem \ref{thm:sigma2ifflearnable}), we notice that
$x=\emptyset^{(\alpha)}$ if and only if
$\emptyset\oplus x\equiv_{\prec}\omega\oplus x$ if and only if $\exists n\forall m
\phi(\emptyset\oplus x,\omega\oplus x,n,m)$ where $\phi$ is $\Delta_1^0(z)$ and
$z$ computes a learner for $\equiv_{\prec}$. Fixing a witness $n$ for $\phi$ we
obtain a $\Pi_1^0(z)$ definition of $\emptyset^{(\alpha)}$ and therefore,
$\emptyset^{(\alpha)}$ is the unique path in a $z$-computable binary tree. Each
isolated path in a $z$-computable binary tree is $z$-computable, hence $z$
computes $\emptyset^{(\alpha)}$.
\end{proof}
While the notion of jump hierarchy was originally defined with well-founded
linear orders in mind---we start with the degree of computable functions and
iterate the jump from there---a surprising consequence of the Gandy basis theorem is that there exist nonstandard jump hierarchies, supported on ill-founded linear orders. However, these are no longer unique. Indeed, Harrison \cite{harrison1968recursive} noted that when an ill-founded linear order supports a jump hierarchy, then it supports a continuum many of them. Moreover, none of these non-standard jump hierarchies can be hyperarithmetic. Indeed, one can show that if a jump hierarchy contains an infinite descending sequence, then it must compute all hyperarithmetic reals. In fact, one can go further, and observe that if $\prec$ is ill-founded and supports a jump hierarchy, then no infinite $\prec$-descending sequence is hyperarithmetic. Thus, we can make the following observation.
\begin{proposition}\label{prop:ifnolearn}
Given an ill-founded computable order $\prec$, which supports a jump hierarchy, the equivalence relation $\equiv_{\prec}$ is not uniformly learnable.
\end{proposition}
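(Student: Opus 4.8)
The plan is to assume $\equiv_{\prec}$ is uniformly learnable and to manufacture a \emph{hyperarithmetic} infinite $\prec$-descending sequence, contradicting the stated fact that an ill-founded computable order supporting a jump hierarchy admits no such sequence. The crucial point is to remain inside the hyperarithmetic world, which is exactly what the $\Delta^1_1$-ness of $\equiv_{\prec}$ buys us through \cref{lem:hyplearner}.

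First, note that $\equiv_{\prec}$ is (lightface) $\Pi^0_2$: the jump-hierarchy condition defining $\mathbf{H}$ is $\Pi^0_2$, and $x\equiv_{\prec}y$ is a Boolean combination of it with the closed conditions $x=y$ and $x^{[1]}=y^{[1]}$. Hence $\equiv_{\prec}$ is $\Delta^1_1$. If it were learnable, \cref{lem:hyplearner} would provide a hyperarithmetic learner, so by \cref{thm:sigma2ifflearnable} we would have $\equiv_{\prec}\in\Sigma^0_2(h)$ for some hyperarithmetic $h$. Exactly as in the proof of \cref{prop:wfjhlearn}, $H\in\mathbf{H}$ iff $\emptyset\oplus H\equiv_{\prec}\omega\oplus H$, so $\mathbf{H}$ is the preimage of $\equiv_{\prec}$ under a computable map and is therefore $\Sigma^0_2(h)$ as well. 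Writing this definition out, $\mathbf{H}=\bigcup_n P_n$ where each $P_n=\{H:\forall m\,\psi(H,n,m)\}$ is a $\Pi^0_1(h)$ class and, importantly, $P_n\subseteq\mathbf{H}$: every element of every $P_n$ is a genuine jump hierarchy.

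Now I invoke ill-foundedness. By Harrison's theorem $\mathbf{H}$ has continuum many elements, so it is uncountable; being a countable union of the $P_n$, some $P_{n_0}$ contains at least two distinct points. A nonempty $\Pi^0_1(h)$ class with at least two members has two distinct members $H_1\neq H_2$ with $H_1,H_2\leq_T h'$ (take the leftmost path, and the leftmost path through the other branch at the first splitting node). Since $h$ is hyperarithmetic, $H_1$, $H_2$, and hence $H_1\oplus H_2$, are hyperarithmetic. This is precisely where the descent to a hyperarithmetic learner is essential: an arbitrary real parameter $a$ could already compute jump hierarchies or descending sequences, so the two witnessing hierarchies would carry no usable complexity bound and no contradiction would ensue.

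Finally I extract the descending sequence. Let $\mathrm{Diff}=\{n:H_1^{[n]}\neq H_2^{[n]}\}$. Since $H_1\neq H_2$ this set is nonempty, and it is $\prec$-downward dense: if $H_1^{[m]}=H_2^{[m]}$ for all $m\prec n$ then $H_1^{[\prec n]}=H_2^{[\prec n]}$, whence $H_1^{[n]}=(H_1^{[\prec n]})'=(H_2^{[\prec n]})'=H_2^{[n]}$; so every $n\in\mathrm{Diff}$ has some $m\prec n$ in $\mathrm{Diff}$. Starting from any $n_0\in\mathrm{Diff}$ and repeatedly searching, computably in $H_1\oplus H_2$, for a $\prec$-smaller element of $\mathrm{Diff}$ (the search halts by downward density) yields an infinite $\prec$-descending sequence $n_0\succ n_1\succ\cdots$ computable from $H_1\oplus H_2$, hence hyperarithmetic, contradicting the fact that no infinite $\prec$-descending sequence is hyperarithmetic. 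The main obstacle is the content of the third paragraph — guaranteeing that the two witnessing jump hierarchies are hyperarithmetic — which is why the argument must route through the $\Delta^1_1$-ness of $\equiv_{\prec}$ and \cref{lem:hyplearner} rather than through an arbitrary oracle $a$; the structural observation that the difference set of two jump hierarchies is $\prec$-downward dense then does the rest.
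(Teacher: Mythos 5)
Your proof is correct, and its setup coincides with the paper's: assume learnability, use the fact that $\equiv_\prec$ is $\Pi^0_2$ (hence $\Delta^1_1$) together with \cref{lem:hyplearner} to replace an arbitrary learner by a hyperarithmetic one $h$, apply \cref{thm:sigma2ifflearnable}, and pull the resulting $\Sigma^0_2(h)$ definition back along $H\mapsto(\emptyset\oplus H,\omega\oplus H)$ as in \cref{prop:wfjhlearn}, writing $\mathbf{H}$ as a countable union of $\Pi^0_1(h)$ classes $P_n\subseteq\mathbf{H}$. The endgame, however, is genuinely different. The paper stops at one nonempty $P_n$: its leftmost path is an $h'$-computable, hence hyperarithmetic, jump hierarchy, which directly contradicts the first fact quoted before the proposition (no jump hierarchy supported on an ill-founded order is hyperarithmetic). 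You instead invoke Harrison's theorem that $\mathbf{H}$ has continuum many members to find some $P_{n_0}$ with two members, extract two hyperarithmetic hierarchies $H_1\neq H_2$ by the leftmost-path/splitting-node basis argument, and show that the set of columns on which they differ is $\prec$-downward dense, so a halting search computable in $H_1\oplus H_2$ produces a hyperarithmetic infinite $\prec$-descending sequence, contradicting the second quoted fact. Your subtler steps all check out: each $P_n$ really is a subset of $\mathbf{H}$, a $\Pi^0_1(h)$ class with two members has two $h'$-computable members, and the descending sequence is computable in the join even though membership in the difference set is only c.e.\ in $H_1\oplus H_2$, because each search is guaranteed to halt. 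As for what each route buys: the paper's argument is shorter, needing only a single basis element, but it leans on the statement that nonstandard jump hierarchies are never hyperarithmetic (whose proof goes through their computing all hyperarithmetic reals); yours pays for Harrison's theorem and the difference-set lemma, but in exchange rests only on the non-existence of hyperarithmetic descending sequences---your downward-density observation is in effect an elementary proof that two distinct jump hierarchies on an ill-founded computable order can never both be hyperarithmetic. Since both supporting facts are stated in the paragraph preceding the proposition, your proof is rigorous relative to the paper to exactly the same degree as the paper's own.
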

\begin{proof}
  As in the proof of \cref{prop:wfjhlearn} we assume that $z$ learns
  $\equiv_\prec$ and apply Theorem \ref{thm:sigma2ifflearnable} to argue that
  there is a $\Pi^0_1(z)$ definition of some non-empty subset of the class of
  jump hierarchies supported on $\prec$. Therefore, there exists a member of
  this class which is arithmetic in $z$---in fact, even $z'$-computable.
  However, by Louveau's separation theorem, $z$ may be assumed to be hyperarithmetic, as $\equiv_\prec$ is hyperarithmetic, in fact even $\Pi_2^0$. This contradicts the fact that no jump hierarchy supported on an ill-founded order is hyperarithmetic.
\end{proof}
One consequence of \cref{prop:ifnolearn} is that for equivalence relations of
the form $\equiv_\prec$ learnability identifies exactly the well-founded part of
$\prec$. More formally, $\equiv_{{\prec} n}$ is learnable if and only if ${\prec} n$
is well-founded. This observation is closely connected to the following result, which is
interesting in its own right.
\begin{theorem}
Let $\prec$ be an ill-founded computable order that supports a jump hierarchy. The well-founded part $\mathcal{O}_\prec$ of $\prec$ is $\Pi^1_1$ but not $\Sigma^1_1$ and not $\Pi^1_1$-complete.
\end{theorem}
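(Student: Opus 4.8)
The plan is to establish the three assertions in turn. That $\mathcal{O}_\prec$ is $\Pi^1_1$ is the standard fact that well-foundedness is $\Pi^1_1$: we have $n\in\mathcal{O}_\prec$ iff $\forall f\in\omega^\omega\ \neg(\forall i\ f(i{+}1)\prec f(i)\preceq n)$, so no further work is needed here.

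For the failure of $\Sigma^1_1$, I would argue by contradiction. If $\mathcal{O}_\prec$ were $\Sigma^1_1$, then, being also $\Pi^1_1$, it would be $\Delta^1_1$ and hence hyperarithmetic, and so would its complement $I=\omega\setminus\mathcal{O}_\prec$, the ill-founded part. Since $\prec$ is ill-founded, $I\neq\emptyset$; fix $n_0\in I$ and define $n_{i+1}$ to be the least $m\prec n_i$ with $m\in I$. Such an $m$ exists because $[{\preceq}n_i]$ is ill-founded, and the recursion is computable from the hyperarithmetic set $I$, so $(n_i)_{i\in\omega}$ is a hyperarithmetic infinite $\prec$-descending sequence. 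This contradicts the fact recalled above that an ill-founded order supporting a jump hierarchy has no hyperarithmetic descending sequence; hence $\mathcal{O}_\prec$ is not $\Sigma^1_1$.

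For the failure of $\Pi^1_1$-completeness I would pass to a well-chosen jump hierarchy. The class of jump hierarchies supported on $\prec$ is nonempty and $\Pi^0_2$, in particular $\Sigma^1_1$, so by the Gandy basis theorem there is such a hierarchy $H$ with $\omega_1^H=\omega_1^{CK}$ and $H'\leq_T\mathcal{W}$ for a fixed $\Pi^1_1$-complete set $\mathcal{W}$; in particular $\mathcal{W}\not\leq_T H$ (otherwise $\mathcal{W}\equiv_T H$ and then $\mathcal{W}'\leq_T H'\leq_T\mathcal{W}$, which is impossible). The key step is then the lemma $\mathcal{O}_\prec\leq_T H$. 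Granting it, if $\mathcal{O}_\prec$ were $\Pi^1_1$-complete then $\mathcal{W}\leq_m\mathcal{O}_\prec$, whence $\mathcal{W}\leq_T\mathcal{O}_\prec\leq_T H$, contradicting $\mathcal{W}\not\leq_T H$.

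To prove the lemma I would use that, for every $n$, one has $n\in\mathcal{O}_\prec$ iff the initial column $H^{[{\prec}n]}$ is hyperarithmetic: if $[{\preceq}n]$ has rank $\alpha<\omega_1^{CK}$ then $H^{[{\prec}n]}$ is the unique jump hierarchy on a well-order of type $\alpha$, namely $\emptyset^{(\alpha)}$, which is hyperarithmetic, whereas if $[{\preceq}n]$ is ill-founded then $H^{[{\prec}n]}$ contains a jump hierarchy along an ill-founded suborder and therefore computes every hyperarithmetic real. The remaining task is to turn this equivalence into an $H$-computation, exploiting $\omega_1^H=\omega_1^{CK}$ so that $H$ may search its own computable ordinals as a clock bounding a putative hyperarithmetic reduction of $H^{[{\prec}n]}$ to some $\emptyset^{(\beta)}$. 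This last step is the main obstacle: a priori ``$H^{[{\prec}n]}$ is hyperarithmetic'' is only $\Pi^1_1(H)$, and making it $H$-decidable requires genuinely using the coincidence $\omega_1^H=\omega_1^{CK}$ together with the coherence of the hierarchy. Once $\mathcal{O}_\prec\leq_T H$ is secured the theorem follows; equivalently it yields $\omega_1^{\mathcal{O}_\prec}=\omega_1^{CK}<\omega_1^{\mathcal{W}}$, which already rules out $\mathcal{W}\leq_T\mathcal{O}_\prec$.
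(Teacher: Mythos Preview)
Your arguments for $\Pi^1_1$ and for the failure of $\Sigma^1_1$ are correct and essentially identical to the paper's.

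Your approach to non-$\Pi^1_1$-completeness, however, breaks down at exactly the point you flag as ``the main obstacle'': the lemma $\mathcal{O}_\prec\leq_T H$ is not merely hard to prove, it is \emph{false} for the Gandy-basis hierarchy $H$ you chose. Since $\prec$ supports a jump hierarchy, it has no hyperarithmetic descending sequence, so by Harrison's analysis its well-founded initial segment has order type exactly $\omega_1^{CK}$. But then $\prec$ restricted to $\mathcal{O}_\prec$ is an $\mathcal{O}_\prec$-computable well-order of type $\omega_1^{CK}$, whence $\omega_1^{\mathcal{O}_\prec}>\omega_1^{CK}=\omega_1^{H}$. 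This immediately gives $\mathcal{O}_\prec\not\leq_T H$ (indeed $\mathcal{O}_\prec\not\leq_h H$), so neither the lemma nor the ``equivalent'' conclusion $\omega_1^{\mathcal{O}_\prec}=\omega_1^{CK}$ can hold. The Gandy-basis route therefore cannot separate $\mathcal{O}_\prec$ from a $\Pi^1_1$-complete set by comparing admissible ordinals.

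The paper's argument is entirely different and avoids degree-theoretic comparisons. Assuming a computable $m$-reduction $g$ from $\mathrm{WO}$ to $\mathcal{O}_\prec$, one builds, uniformly in parameters $a,b$, a computable linear order $\leq_{a,b}$ which is well-founded iff $g(a)\not\prec g(b)$. Smullyan's double recursion theorem then produces indices $e_0,e_1$ with $\phi_{e_0}$ coding $\leq_{e_0,e_1}$ and $\phi_{e_1}$ coding $\leq_{e_1,e_0}$; exactly one of these is well-founded, and comparing $g(e_0),g(e_1)$ in $\prec$ contradicts the downward closure of $\mathcal{O}_\prec$. This diagonalisation uses only that $\mathcal{O}_\prec$ is a proper $\prec$-initial segment and that $\prec$ is linear, not any fine structure of jump hierarchies.
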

\begin{proof}
The well-founded part of any computable linear order $\prec$ is clearly
$\Pi^1_1$ as it is defined by 
\[ n\in \mathcal O_{\prec} \iff (\forall \bar a \in {\prec}n^\omega) \exists i\ 
a_{i}\prec a_{i+1}\]
and ${\prec} n$ is uniformly computable.
Since $\prec$ supports a jump hierarchy, it contains no infinite descending
hyperarithmetic sequence. In particular, $\overline{\mathcal{O}_\prec}$ cannot
be hyperarithmetic, i.e., $\Delta^1_1$, and hence, $\mathcal{O}_\prec$ is not
$\Delta^1_1$, which together with it being $\Pi^1_1$ implies that it is not $\Sigma^1_1$.

To see that $\mathcal{O}_\prec$ is not $\Pi^1_1$-complete suppose towards a
contradiction that the $\Pi^1_1$-complete set $\mathrm{WO}=\{e: \phi_e \text{ computes a diagram of a well-ordered set}\}$ reduces to $\mathcal{O}_\prec$ via a computable function
computed by $g$. For simplicity, we assume that $\prec$ is a strict ordering.
We use $g$ to define a computable linear ordering $\leq_{a,b}$ uniformly for each pair of natural numbers $a$ and $b$. The ordering is defined by induction---at each stage $s$, $\leq_{a,b}$ is a finite linear ordering of
size $s$, starting from an empty ordering at $s=0$. Suppose we have defined the ordering on the first $s$ natural numbers and we are at stage $s+1$ of the construction. If
$g_{s}(a)\prec
g_{s}(b)$, then we add $s+1$ as the smallest element so far in $\leq_{a,b}$. Otherwise, i.e., if $g_{s}(a)\prec g_{s}(b)$ or
one of the two computations did not halt after $s$ steps, then we add $s$ as the largest element so far in $\leq_{a,b}$. This finishes the construction.

Now, we use Smullyan's double recursion theorem to argue that there exists $e_0$
and $e_1$ such that $\phi_{e_0}$ computes the diagram of $\leq_{e_0,e_1}$ and
$\phi_{e_1}$ computes the diagram of $\leq_{e_1,e_0}$. Note that
$\leq_{e_0,e_1}$ is ill-founded if and only if $\leq_{e_1,e_0}$ is well-founded.
Suppose that $g(e_0)\prec g(e_1)$. This means that $\leq_{e_0,e_1}$ is
ill-founded. However, by our assumption $g$ is an appropriate reduction, and
hence, $g$ maps $e_0$ outside of $\mathcal{O}_\prec$. But since $\leq_{e_1,e_0}$
is well-founded, $g(e_1)\in\mathcal{O}_\prec$. This is a contradiction, as the well-founded part of $\prec$ is closed downwards. Similarly, we obtain a contradiction for the case when $g(e_1)\prec g(e_0)$.

\end{proof}

\subsection{Model theory}
As mentioned in the introduction, the work in this paper was inspired by the work
of Fokina, Kötzing, and San Mauro~\cite{fokina2019limit} and others on learning
the isomorphism relation on countably many isomorphism types. One of the first
questions that arises is whether it is possible to learn the isomorphism
relation on the whole space of structures. This should of course be impossible,
as, in general, the isomorphism relation is not even Borel. The following
proposition makes this observation precise in terms of non-uniform learnability.
\begin{proposition}\label{prop:vocabnotlearnable}
  Let $\tau$ be a countable vocabulary containing at least one relation symbol. Then $Mod(\tau)$ is not non-uniformly
  learnable. 
\end{proposition}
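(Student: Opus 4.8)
The plan is to show that the isomorphism relation on $Mod(\tau)$ is so complicated that it violates the separation criterion from \cref{prop:eqclassesBorelsep}, which characterizes non-uniform Borel learnability (a weaker condition than non-uniform learnability). Recall that by \cref{prop:eqclassesBorelsep}, if $Mod(\tau)$ were non-uniformly learnable, then for every $\vec x\in Mod(\tau)^\omega$ there would exist Borel sets $S_i$ with $[x_i]\subseteq S_i$ and $S_i\cap S_j=\emptyset$ whenever $x_i\not\cong x_j$. Since non-uniform learnability implies non-uniform Borel learnability, it suffices to exhibit a single sequence $\vec x$ for which no such Borel separating family exists.

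First I would reduce to the case of a single binary relation symbol, since any vocabulary with at least one relation symbol of arity $\geq 1$ can code graphs (or linear orders), and the isomorphism relation on graphs is Borel-complete among isomorphism relations on countable structures. The key obstacle is producing a countable sequence $\vec x=(x_i)_{i\in\omega}$ whose isomorphism classes cannot be pairwise separated by Borel sets. The natural candidate is to take $\vec x$ to enumerate a dense family of pairwise non-isomorphic structures whose isomorphism classes are \emph{not} Borel, or more precisely whose classes are so entangled that Borel separation fails. The cleanest route is to invoke the fact that isomorphism on $Mod(\tau)$ is not Borel (indeed $\bSigma^1_1$-complete for such $\tau$); then I would argue that if every pair of classes appearing in some fixed sequence could be Borel-separated, one could assemble these separations to Borel-compute the isomorphism relation restricted to that sequence, which need not itself be impossible for \emph{every} sequence—so more care is required.

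The sharper approach is as follows. I would choose $\vec x$ to be a sequence of structures all lying in a single isomorphism class's ``boundary,'' specifically exploiting a structure $x_0$ whose isomorphism class $[x_0]_{\cong}$ is not Borel. Take $\vec x=(x_0)$, the constant sequence, or a sequence converging to $x_0$'s orbit. By the ``furthermore'' clause of \cref{prop:eqclassesBorelsep}, non-uniform learnability by learners not giving false positives requires $[x]$ to be Borel for every $x$; since for a relational vocabulary there exist structures whose isomorphism class is non-Borel (e.g. a structure coding a $\bSigma^1_1$-complete set), this immediately rules out false-positive-free learnability. The remaining work—and the main obstacle—is ruling out learners \emph{with} false positives, i.e. establishing that the separation condition (2) of \cref{prop:eqclassesBorelsep} also fails. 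For this I would pick $x_1,x_2,\ldots$ to be an infinite family of pairwise non-isomorphic structures accumulating at a non-Borel orbit in such a way that any Borel sets $S_i\supseteq[x_i]$ that are pairwise disjoint would yield a Borel set separating the non-Borel class from its complement, contradicting non-Borelness.

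The hard part will be the precise choice of the accumulating sequence and verifying that pairwise Borel separation of the classes forces a Borel definition of an intrinsically non-Borel (or $\bSigma^1_1$-complete) set. I expect the verification to proceed by a Baire-category or effective-descriptive-set-theory argument: one shows that the isomorphism types realized along $\vec x$ are topologically indistinguishable enough—via back-and-forth or a continuous reduction from $\INF$ or from a $\bSigma^1_1$-complete equivalence relation into $\cong\restriction Mod(\tau)$—that any countable Borel partition separating them would refine into a Borel assignment computing membership in a non-Borel set. Concretely, I would transport the Borel-completeness of graph isomorphism to embed a specific non-learnable equivalence relation (for instance one continuously reducing $\INF$, whose non-$\bSigma^0_2$-ness drove the proof of \cref{thm:bclearning}) into $\cong$ and then apply the separation criterion, obtaining the contradiction.
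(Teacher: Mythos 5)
Your strategy cannot work as written, because it targets the wrong level of the hierarchy. Non-uniform learnability is equivalent, by \cref{prop:eqclassesS2sep}, to the existence of pairwise disjoint \emph{$\bSigma^0_2$} sets separating the classes of every sequence; the Borel separation condition of \cref{prop:eqclassesBorelsep} is a strictly weaker consequence, and for isomorphism it \emph{never} fails: by Scott analysis (equivalently, because every orbit of a Polish group action is Borel) every isomorphism class $[\A]_{\cong}$ of a countable structure is Borel, so for any sequence $\vec x$ one may simply take $S_i=[x_i]_{\cong}$. Thus $Mod(\tau)$ is always non-uniformly \emph{Borel} learnable, and your plan to exhibit a sequence admitting no Borel separating family is doomed. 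The same misconception underlies your claim that some structure has a non-Borel isomorphism class: what is non-Borel (indeed $\bSigma^1_1$-complete) is the isomorphism \emph{relation} as a subset of $Mod(\tau)^2$, not any individual class, so the ``furthermore'' clause of \cref{prop:eqclassesBorelsep} rules out nothing here. Your preliminary reduction is also off: a single unary relation symbol does not interpret graphs and its isomorphism relation is nowhere near Borel complete --- but no completeness is needed.

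The correct obstruction lives at the $\bSigma^0_2$ level, and your closing instinct of continuously reducing $\INF$ is exactly the right one once it is fed into \cref{prop:eqclassesS2sep} rather than the Borel criterion. The paper takes $\tau$ to consist of one unary predicate $R$, lets $\A_n$ be the structure in which exactly the elements $m\leq n$ satisfy $R$, and $\A_\infty$ the structure in which $R$ is infinite and coinfinite. The continuous map $x\mapsto \B_x$ defined by $R^{\B_x}(2n)\iff x(n)=1$ and $\neg R^{\B_x}(2n+1)$ sends each $x\in\INF$ into $[\A_\infty]_{\cong}$ and each $x\notin\INF$ into the class of some $\A_n$. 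If the single sequence $(\A_\infty,\A_0,\A_1,\dots)$ admitted pairwise disjoint $\bSigma^0_2$ sets $S_i$ covering its classes, then $x\in\INF\iff \B_x\in S$ for the set $S$ covering $[\A_\infty]_{\cong}$, exhibiting the $\bPi^0_2$-complete set $\INF$ as $\bSigma^0_2$ --- a contradiction. No category argument, no Borel completeness of graph isomorphism, and no back-and-forth analysis is required.
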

\begin{proof}
We will assume without loss of generality that $\tau$ contains exactly one unary
relation symbol $R$.

Consider structures $\A_n$ such that $R^{\A_n}(m)$ if and only if $m\leq n$, and
a structure $\A_\infty$ where $\{m: R^{\A_{\infty}}(m)\}$ is an infinite, coinfinite subset
of $A$. Given $x\in 2^\omega$, let $\B_x$ be such that $R^{\B_x}(2n) \iff
x(n)=1$ and $\neg R^{\B_x}(2n+1)$ for all $n$. Clearly, the map $x\mapsto
\B_x$ is continuous and $\B_x\cong \A_\infty$ if and only if $x\in \INF$.

Now, assume there was a learner $L$ for the sequence $(\A_\infty,\A_1,\dots)$,
i.e., there are $\bSigma^0_2$ sets separating the isomorphism classes. Then, we
would get that $x\in \INF$ if and only if $\B_x\in S$ where $S$ is the
$\bSigma^0_2$ set containing $Iso(\A_\infty)$. But then $\INF$ would be
$\bSigma^0_2$, contradicting that $\INF$ is $\bPi^0_2$-complete.
\end{proof}
\begin{remark}
Various examples of countable classes of structures $C$ that are explanatory
learnable are known~\cite{bazhenov2020learning}. Thus,
\cref{prop:vocabnotlearnable} fails if we restrict our attention to $\tau$-structures satisfying a fixed 
$L_{\omega_1\omega}$ sentence $\phi$.
\end{remark}
The following is an immediate corollary of \cref{prop:vocabnotlearnable} and
\cref{prop:eqclassesS2sep}. It generalizes the result by Miller that no
structure in relational language has a $\Sinf{2}$ Scott
sentence~\cite{miller1983}.
\begin{corollary}
  There is no vocabulary $\tau$ such that for all $\A_0,\dots \in Mod(\tau)$
  there are $\bSigma^0_2$ sets $S_0,\dots$ such that $\A_i\subseteq S_i$ and
  $S_i\cap S_j=\emptyset$ if and only if $\A_i\not\cong \A_j$.  
\end{corollary}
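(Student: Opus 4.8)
The plan is to derive this as a direct consequence of the two cited results. The statement asserts: there is no vocabulary $\tau$ such that for every sequence $\A_0,\A_1,\dots\in Mod(\tau)$ there are $\bSigma^0_2$ sets $S_0,S_1,\dots$ with $A_i\subseteq S_i$ and $S_i\cap S_j=\emptyset$ if and only if $\A_i\not\cong\A_j$. First I would reformulate the existence of such separating $\bSigma^0_2$ sets using \cref{prop:eqclassesS2sep}. That proposition tells us that for an equivalence relation $\E$, the existence (for every $\vec x$) of $\bSigma^0_2$ sets $S_i$ with $[x_i]\subseteq S_i$ and $S_i\cap S_j=\emptyset$ whenever $x_i\not\E x_j$ is exactly equivalent to $\E$ being non-uniformly learnable. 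Applied to the isomorphism relation $\cong$ on $Mod(\tau)$, the hypothesized family of separating sets is precisely a witness to the non-uniform learnability of $\cong$ on $Mod(\tau)$.

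The key step is then to invoke \cref{prop:vocabnotlearnable}, which states that for any countable vocabulary $\tau$ containing at least one relation symbol, $Mod(\tau)$ is not non-uniformly learnable. Combining these two facts immediately yields the contradiction: if such separating $\bSigma^0_2$ sets existed for all sequences, then $\cong$ on $Mod(\tau)$ would be non-uniformly learnable, contradicting \cref{prop:vocabnotlearnable}. I should be slightly careful about the logical form of the biconditional in the corollary's statement (``$S_i\cap S_j=\emptyset$ if and only if $\A_i\not\cong\A_j$''), which is formally stronger than the one-directional disjointness in \cref{prop:eqclassesS2sep}; but since we only need the forward direction ($\A_i\not\cong\A_j\implies S_i\cap S_j=\emptyset$) to match the hypothesis of \cref{prop:eqclassesS2sep}, the stronger biconditional hypothesis certainly supplies it, so the reduction goes through.

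I expect the proof to be genuinely short—essentially a two-line assembly of the two prior results—so the only real subtlety is bookkeeping: confirming that the vocabularies and the relation in question line up (any relational $\tau$ suffices, and isomorphism is the relevant equivalence relation), and noting that the claim about generalizing Miller's theorem is commentary rather than something needing separate proof. The main (only) obstacle is thus the minor one of matching the exact quantifier structure of the corollary's hypothesis to that of \cref{prop:eqclassesS2sep}; there is no hard mathematical content beyond the two cited statements.

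\begin{proof}
  Suppose for contradiction that such a vocabulary $\tau$ exists, so that for
  every sequence $\A_0,\A_1,\dots\in Mod(\tau)$ there are $\bSigma^0_2$ sets
  $S_0,S_1,\dots$ with $A_i\subseteq S_i$ and $S_i\cap S_j=\emptyset$ whenever
  $\A_i\not\cong\A_j$ (this is the forward direction of the assumed
  biconditional). By \cref{prop:eqclassesS2sep} applied to the isomorphism
  relation $\cong$ on $Mod(\tau)$, this is exactly the statement that $\cong$ is
  non-uniformly learnable on $Mod(\tau)$. But \cref{prop:vocabnotlearnable}
  shows that $Mod(\tau)$ is not non-uniformly learnable for any countable
  vocabulary $\tau$ containing at least one relation symbol, a contradiction.
\end{proof}
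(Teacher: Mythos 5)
Your proof is correct and is precisely the argument the paper intends: the paper presents this corollary as an immediate consequence of \cref{prop:eqclassesS2sep} and \cref{prop:vocabnotlearnable} with no further proof, and your assembly of the two (reading $\A_i\subseteq S_i$ as containment of the isomorphism class, and using only the forward direction of the biconditional) is exactly that assembly.
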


If we consider Borel learnability, then the picture becomes a bit more
interesting. Hjorth and Kechris~\cite{hjorth1996} showed that an equivalence
relation induced by the Borel action of a closed subgroup of $S_{\infty}$, the permutation group of
the natural numbers, is potentially $\bSigma^0_2$ if and only if it is
\emph{essentially countable}, i.e., Borel
reducible to a countable equivalence relation. Since the isomorphism relation on
any class of structures is an orbit equivalence relation induced by
$S_{\infty}$ we get the following as a corollary of
\cref{thm:borellearnableiffpotsigma2}.
\begin{corollary}\label{cor:learnableiffessentctble}
  Given an $L_{\omega_1\omega}$ sentence $\phi$ we have that $\cong_\phi$ is Borel
  learnable if and only if $\cong_\phi$ is essentially countable.
\end{corollary}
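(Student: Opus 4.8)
The plan is to chain \cref{thm:borellearnableiffpotsigma2} together with the Hjorth--Kechris dichotomy for orbit equivalence relations. First I would recall the standard logic action: for a countable relational vocabulary $\tau$, the space $Mod(\tau)$ of $\tau$-structures with universe $\omega$ is a closed, hence Polish, subspace of a product of Cantor spaces, and $S_\infty$ acts continuously on it by permuting the universe, with orbit equivalence relation exactly the isomorphism relation $\cong$. For an $L_{\omega_1\omega}$ sentence $\phi$, the class $Mod(\phi)$ is, by the L\'opez--Escobar theorem, a Borel and isomorphism-invariant subset, so $\cong_\phi$ is the orbit equivalence relation of a Borel action of the closed subgroup $S_\infty\leq S_\infty$ on a standard Borel space; after a change of topology one may take this to be a continuous action on a Polish space without altering the generated Borel sets.

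With this setup the corollary follows by combining two equivalences. By \cref{thm:borellearnableiffpotsigma2}, $\cong_\phi$ is Borel learnable if and only if it is potentially $\bSigma^0_2$. By the theorem of Hjorth and Kechris~\cite{hjorth1996}, an equivalence relation induced by a Borel action of a closed subgroup of $S_\infty$ is potentially $\bSigma^0_2$ if and only if it is essentially countable. Since $\cong_\phi$ is exactly such an orbit equivalence relation, the second equivalence applies, and chaining the two yields that $\cong_\phi$ is Borel learnable if and only if it is essentially countable.

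I do not expect a substantial obstacle here, as the statement is an assembly of results already available to us. The only point deserving care is the verification that $\cong_\phi$ genuinely satisfies the hypotheses of the Hjorth--Kechris theorem, namely that it arises as the orbit equivalence relation of a Borel $S_\infty$-action on a standard Borel space; this is the routine logic action described in the first paragraph.
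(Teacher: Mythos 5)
Your proposal is correct and follows the same route as the paper: chain \cref{thm:borellearnableiffpotsigma2} (Borel learnable iff potentially $\bSigma^0_2$) with the Hjorth--Kechris theorem that orbit equivalence relations of closed subgroups of $S_\infty$ are potentially $\bSigma^0_2$ iff essentially countable, noting that $\cong_\phi$ is such an orbit equivalence relation via the logic action. Your added verification via L\'opez--Escobar that $Mod(\phi)$ is an invariant Borel set is a reasonable extra detail the paper leaves implicit.
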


\printbibliography
\end{document}